\newcommand{\C}{\mathbb{C}}
\newcommand{\Z}{\mathbb{Z}}
\newcommand{\R}{\mathbb{R}}
\newcommand{\op}{\operatorname}
\newcommand{\wh}{\widehat}
\newtheorem{theorem}{Theorem}[section]
\newtheorem{remark}[theorem]{ Remark}
\newtheorem{corollary}[theorem]{Corollary}
\newtheorem{proposition}[theorem]{Proposition}
\newtheorem{lemma}[theorem]{Lemma}
\newtheorem{definition}[theorem]{Definition}
\newtheorem{definition/lemma}[theorem]{Definition/Lemma}
\title{Extremal Rays of the Equivariant Littlewood-Richardson Cone}
\author{Joshua Kiers}
\begin{document}
\maketitle

\begin{abstract} 
We give
an inductive procedure for finding the extremal rays of the equivariant Littlewood-Richardson cone, which is closely related to the solution space to S.~Friedland's majorized Hermitian eigenvalue problem. In so doing, we solve the ``rational version'' of a problem posed by C.~Robichaux, H.~Yadav, and A.~Yong. 
Our procedure is a natural extension of P.~Belkale's algorithm for the classical Littlewood-Richardson cone. The main tools for accommodating the equivariant setting are certain foundational results of D.~Anderson, E.~Richmond, and A.~Yong. We also study two families of special rays of the cone and make observations about the Hilbert basis of the associated lattice semigroup. 
\end{abstract}

\section{Introduction}

In this work we collide the two worlds of \cite{B2} and \cite{ARY} in order to answer a question of C.~Robichaux, H.~Yadav, and A.~Yong \cite{RYY}. In \cite{B2}, P.~Belkale introduced an algorithm for finding the extremal rays of the Hermitian eigencone (also called the tensor cone or Littlewood-Richardson cone) -- the pointed rational cone which among other things governs the nonvanishing of Littlewood-Richardson coefficients. In \cite{ARY}, D.~Anderson, E.~Richmond, and A.~Yong proved that the \emph{equivariant} Littlewood-Richardson nonvanishing problem is determined by a similar cone, of which the former is a facet, thereby proving the equivariant nonvanishing problem to be saturated. Here, we naturally adapt Belkale's algorithm to the equivariant setting, repeatedly making use of the core Proposition 2.1 from \cite{ARY}, thus finding \emph{most} of the extremal rays of the equivariant Littlewood-Richardson cone. The missing rays are few in number and easily described. 

To provide a little context, a famous problem from the $19^\text{th}$ century is to determine the possible eigenvalues of three Hermitian matrices $A,B$, and $C$ which satisfy $A+B=C$. Horn conjectured \cite{Horn} that a certain recursive set of inequalities on the eigenvalues were necessary and sufficient for such matrices to exist. This turned out to be true, as established by Klyachko \cite{Kly} and Totaro \cite{Tot}.  While Horn's list of constraints is overdetermined, the essential inequalities were found by Belkale \cite{B1} and proven to be minimal by Knutson, Tao, Woodward \cite{KTW}. 
For a much more thorough treatment of this story, see \cite{Fu}. 

There is a natural generalization to this problem. Recall that a Hermitian matrix $A$ \emph{majorizes} another, say $B$, if $A-B$ is positive semidefinite (written $A\ge B$). S.~Friedland \cite{Frie} studied the question: what are the possible eigenvalues of $A$, $B$, and $C$ if $A+B\ge C$? Friedland proposed a set of inequalities, and W.~Fulton \cite{Fu2} showed they correctly answered the problem but were redundant, providing a minimal set that is remarkably the same as the essential Horn inequalities from above! 

In both the aforementioned problems, the sets of integer eigenvalue solutions have special significance. By the Saturation Theorem of \cite{KT}, nonnegative integral solutions to the first problem parametrize the nonzero structure constants (Littlewood-Richardson coefficients) for multiplication in three equivalent settings: Schur polynomials, Grassmannian Schubert classes, and classes of irreducibles in the representation ring of $GL_n$. 
By \cite[Theorem 1.3]{ARY}, the second problem is connected to \emph{double} Schur polynomials and multiplication in the \emph{equivariant} cohomology rings of Grassmannians. 

The solutions to both problems are governed by rational, linear inequalities, so the solution sets, viewed inside the relevant real vector spaces, form convex rational polyhedral cones. These we denote by $\mathsf{LR}$ for the nonvanishing of Littlewood-Richardson problem and $\mathsf{EqLR}$ for the second (equivariant) problem. Such cones have finitely many \emph{extremal rays} -- faces of dimension $1$ -- and the associated semigroups of lattice points have a finite generating set, the \emph{Hilbert basis}, which always includes the set of (primitive points on) extremal rays. 

In \cite[Problem A]{Z}, A.~Zelevinsky posed the question of finding the Hilbert basis for $\mathsf{LR}$ explicitly, and Robichaux, Yadav, and Yong \cite[\S 6]{RYY} asked the same question for $\mathsf{EqLR}$. While both questions remain open, Belkale's algorithm \cite{B2} serves as a partial (``rational'') solution to Zelevinsky's question in that it gives formulas for finding the extremal rays of $\mathsf{LR}$. 
Our main result is an analogous solution to Robichaux, Yadav, and Yong's question, which we obtain by 
extending Belkale's algorithm to the equivariant setting. 

After reviewing the various cones and their inequalities (\S \ref{notes}), as well as recalling Belkale's algorithm in detail (\S \ref{belk}), we show
that for any Horn facet of the equivariant Littlewood-Richardson cone, our algorithm produces the extremal rays on that facet (Theorems \ref{protheo}, \ref{indhat}). Unlike in \cite{B2}, there are some rays not on any Horn facet, and we explicitly enumerate these (Theorem \ref{extras}). Finally, we show by example (\S \ref{x}) that there do in general ($r\ge6$) exist Hilbert basis elements which do not lie on an extremal ray. This phenomenon has not been observed for the classical Littlewood-Richardson cones. 

\subsection{Acknowlegements} The author thanks Alex Yong for calling attention to this problem and for encouragement along the way, as well as Prakash Belkale and Dave Anderson for helpful discussions and feedback. 

\section{Notation and inequalities for the cones}\label{notes}

Both of the eigenvalue problems above can be considered with a greater number of summands. Let $r\ge 1$ and fix some $s\ge 3$ ($s-1$ will be the number of summands). Let us consider the space of possible eigenvalues of $r\times r$ matrices $A_1,\hdots,A_{s-1}$, and $C$ such that $\sum A_j = C$ or $\sum A_j\ge C$. We will always list the eigenvalues of a matrix in decreasing order, counted with multiplicity, for example: $\lambda = (\lambda_1\ge \lambda_2\ge \hdots \ge \lambda_r)\in \R^r$. Define 
$$
\mathsf{C}_r^s = \left\{(\lambda^1,\hdots,\lambda^{s-1},\nu)\in (\R^r)^{s} \left| \begin{array}{c} \exists \text{ $r\times r$ Herm. matrices $A_j$, $C$ with eigenvalues $\lambda^j,\nu$} \\ \text{s.t. $A_1+\hdots+A_{s-1} = C$} \end{array}\right.\right\}.
$$
Similarly define 
$$
\mathsf{EqC}_r^s = \left\{(\lambda^1,\hdots,\lambda^{s-1},\nu)\in (\R^r)^{s} \left| \begin{array}{c} \exists \text{ $r\times r$ Herm. matrices $A_j$, $C$ with eigenvalues $\lambda^j,\nu$} \\ \text{s.t. $A_1+\hdots+A_{s-1} \ge C$} \end{array}\right.\right\}.
$$

\subsection{Inequalities describing the cones}

To describe the Horn inequalities, which cut out the above cones $\mathsf{C}_r^s, \mathsf{EqC}_r^s$ inside $(\R^r)^s$, we first introduce the notation for Grassmannian Schubert calculus which we will need. Let $n$ be a sufficiently large integer, to be specified as needed, and let $\op{Gr}(r,\C^n)$ denote the Grassmannian of $r$-dimensional subspaces in $\C^n$. The cohomology ring $H^*(\op{Gr}(r,\C^n))$ has a distinguished graded basis given by classes of Schubert varieties $[X_\lambda]$, one for each partition $\lambda = (\lambda_1\ge\hdots \ge \lambda_r)\in \Z_{\ge 0}^r$ such that $\lambda_1\le n-r$ (i.e., the Young diagram for $\lambda$ fits inside an $r\times (n-r)$ rectangle); here 
$$
X_\lambda = \{V\in \op{Gr}(r,\C^n)| \dim (V\cap F_{n-r+i-\lambda_i})\ge i,~ \forall 1\le i\le r\},
$$ 
and $F_\bullet$ is the fixed flag on the standard basis of $\C^n$ defined by $F_i = \C e_n\oplus \hdots \oplus \C e_{n-i+1}$.
The complex codimension of $X_\lambda$ is $|\lambda|:=\lambda_1+\hdots+\lambda_r$. 

We define the (multiple-factor) structure coefficients $c_{\lambda^1,\hdots,\lambda^{s-1}}^\nu$ by the rule
$$
[X_{\lambda^1}]\cdots [X_{\lambda^{s-1}}] = \sum_{\nu} c_{\lambda^1,\hdots,\lambda^{s-1}}^\nu [X_\nu] \in H^*(\op{Gr}(r,\C^n))
$$
for $n$ larger than $r+\sum_{j=1}^{s-1} \lambda_1^j$ (these classes and products are \emph{stable}: it does not matter which $n$ we take, as long as it is big enough.) When $s=3$, these coefficients are the Littlewood-Richardson numbers. 

Partitions $\lambda$ whose Young diagrams fit in an $r\times (n-r)$ rectangle are in bijection with $r$-element subsets of $[n]:=\{1,\hdots,n\}$. Such a subset we will typically write in increasing order as $I = \{i_1<i_2<\hdots<i_r\}$. The bijection is often denoted $\tau$ and goes as follows:
~\\
\begin{center}
\begin{tikzcd}
I = \{i_1<i_2<\hdots<i_r\} \arrow[r,mapsto,"\tau"]&  \tau(I) = (i_r-r \ge \hdots i_2-2 \ge i_1-1)
\end{tikzcd}
\end{center}
~\\
\noindent
Note that the definition is independent of $n$. If no confusion is likely to arise, we'll just write $c_{I_1,\hdots,I_{s-1}}^{K}$ instead of $c_{\tau(I_1),\hdots,\tau(I_{s-1})}^{\tau(K)}$. 

Now we can state the celebrated theorem resolving Horn's conjecture: 

\begin{theorem}[\cite{Kly,B1,KTW}]
Let $\lambda^j = (\lambda_1^j, \hdots , \lambda_r^j)$, $j=1,\hdots,s-1$ and $\nu = (\nu_1, \hdots ,\nu_r)$ be $s$ sequences of $r$ real numbers each. Then $(\lambda^1,\hdots,\lambda^{s-1},\nu)\in \mathsf{C}_r^s$ if and only if 
\begin{enumerate}
\item[(i)] $\lambda_1^j\ge \hdots \ge \lambda_r^j$ for each $j=1,\hdots,s-1$; and $\nu_1\ge \hdots \ge \nu_r$ (since we write eigenvalues in nonincreasing order);
\item[(ii)] $\sum_{j=1}^{s-1} |\lambda^j| = |\nu|$; and 
\item [(iii)] for every $1\le d<r$ and every collection $I_1,\hdots,I_{s-1},K$ of $d$-element subsets of $[r]$ satisfying 
\begin{align}\label{lr=1}
c_{\tau(I_1),\hdots,\tau(I_{s-1})}^{\tau(K)} = 1,
\end{align}
the inequality 
$$
\sum_{j=1}^{s-1} \sum_{a\in I_j} \lambda^j_a \ge \sum_{k\in K} \nu_k
$$
holds. 
\end{enumerate}
\end{theorem}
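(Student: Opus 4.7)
The plan is to prove necessity and sufficiency of (i)--(iii) separately, with the real content lying in the Horn-type inequalities (iii); conditions (i) and (ii) are immediate from the ordering convention on eigenvalues and the trace identity $\sum_j \op{tr}(A_j) = \op{tr}(C)$.

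For \emph{necessity}, I would argue geometrically. Given Hermitian $A_1,\ldots,A_{s-1},C$ with $\sum_j A_j = C$ and eigenvalues $\lambda^j,\nu$, and $d$-element subsets $I_1,\ldots,I_{s-1},K\subset[r]$ with positive structure coefficient, the corresponding Schubert varieties in $\op{Gr}(d,\C^r)$ defined with respect to the eigenflags of the $A_j$'s (and the dual eigenflag of $C$) have nonempty common intersection. For any $W$ in this intersection, the Schubert cell conditions force $\op{tr}(A_j|_W) \le \sum_{a\in I_j}\lambda_a^j$ and $\op{tr}(C|_W) \ge \sum_{k\in K}\nu_k$; summing the first over $j$ and using $\sum_j \op{tr}(A_j|_W) = \op{tr}(C|_W)$ yields the desired Horn inequality.

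For \emph{sufficiency}, since $\mathsf{C}_r^s$ is rational polyhedral, it suffices to realize integral tuples. Klyachko's theorem identifies integral points of $\mathsf{C}_r^s$ with tuples for which $c_{N\lambda^1,\ldots,N\lambda^{s-1}}^{N\nu}\neq 0$ for some $N\ge 1$, via the moment map picture for products of coadjoint orbits of $U(r)$: the Hilbert--Mumford criterion for GIT semistability translates precisely into a Horn-type inequality for each positive-coefficient datum. The Knutson--Tao saturation theorem removes $N$, reducing to the nonvanishing of $c_{\lambda^1,\ldots,\lambda^{s-1}}^{\nu}$ itself. Reduction to the case $s=3$ is by grouping $A_2 + \cdots + A_{s-1}$ and iterating.

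The main obstacle is the last step, Belkale's reduction from the a priori infinite family of inequalities indexed by $c \ge 1$ to the minimal list indexed by $c = 1$. I would follow Belkale's induction on $r$: any facet-defining inequality arising from a coefficient $\ge 2$ is expressed as a positive rational combination of $c = 1$ inequalities on smaller Grassmannians, using the Levi-movable transversality criterion to control how sub-intersections on these smaller Grassmannians decompose. The delicacy is that this reduction must simultaneously preserve nonvanishing of the sub-coefficients and compatibility of the resulting half-spaces with the original facet, which requires careful intersection-theoretic bookkeeping and is precisely the step that the present paper will have to adapt to the equivariant setting.
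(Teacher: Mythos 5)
The paper does not prove this theorem; it is stated as a citation to \cite{Kly,B1,KTW}, the canonical references resolving Horn's conjecture, and no argument is given. Your proposal is therefore being measured against the published literature rather than against anything in the present paper.

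As a high-level outline of that literature your sketch is broadly accurate: conditions (i) and (ii) are indeed trivial; necessity of (iii) is the Hersch--Zwahlen/Totaro/Klyachko trace estimate on a common point of the three Schubert varieties built from the eigenflags; sufficiency comes from combining Klyachko's GIT/moment-map description of the cone (which uses coefficients $c\ge 1$), Knutson--Tao saturation to identify integral points with nonvanishing LR coefficients, and Belkale's reduction of the inequality list to $c=1$. Two small cautions. First, be careful with flag conventions in the necessity step: whether one gets $\op{tr}(A_j|_W)\le\sum_{a\in I_j}\lambda^j_a$ or the opposite inequality depends on whether the Schubert variety is defined with respect to the eigenflag ordered by decreasing or increasing eigenvalues, and one must use opposite conventions on the $A_j$ side and the $C$ side for the traces to telescope correctly. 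Second, your appeal to ``Levi-movable transversality'' for the $c\ge 2$ reduction is anachronistic: Levi-movability is a notion introduced by Belkale--Kumar (2006) to characterize a smaller cohomology product; the reduction to $c=1$ in the cited paper \cite{B1} is an inductive transversality/dimension-count argument on smaller Grassmannians that predates and does not invoke Levi-movability. (That the $c=1$ inequalities are moreover the exact facets is the separate content of \cite{KTW}, which the statement also cites but does not strictly need for the ``if and only if.'') With those corrections your outline matches the standard proof.
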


Concerning Friedland's problem, we have the following result of W.~Fulton:

\begin{theorem}[\cite{Fu2}]
Let $\lambda^j = (\lambda_1^j, \hdots , \lambda_r^j)$, $j=1,\hdots,s-1$ and $\nu = (\nu_1, \hdots ,\nu_r)$ be $s$ sequences of $r$ real numbers each. Then $(\lambda^1,\hdots,\lambda^{s-1},\nu)\in \mathsf{EqC}_r^s$ if and only if they satisfy (i) and (iii) above as well as 
\begin{enumerate}
\item[(ii')] $\sum_{j=1}^{s-1} |\lambda^j| \ge |\nu|$\footnote{in fact, this is the unique ``Horn inequality'' for $d=r$.}.
\end{enumerate}
\end{theorem}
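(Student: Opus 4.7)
The plan is to prove both directions of the characterization, with the forward (necessity) direction being routine and the reverse direction requiring a polyhedral lifting argument.

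\textbf{Necessity.} Suppose $(\lambda^1,\ldots,\lambda^{s-1},\nu) \in \mathsf{EqC}_r^s$, witnessed by Hermitian $A_j, C$ with $P := \sum_j A_j - C \succeq 0$. Condition (i) is our ordering convention. Taking the trace of $P \succeq 0$ yields $\sum_j |\lambda^j| - |\nu| = \op{tr}(P) \ge 0$, which is (ii'). For (iii), let $M = \sum_j A_j$ with eigenvalues $\mu_1 \ge \cdots \ge \mu_r$. Since $M = A_1 + \cdots + A_{s-1}$, the classical Horn theorem applied to $(\lambda^1,\ldots,\lambda^{s-1},\mu)$ gives $\sum_j \sum_{a \in I_j} \lambda^j_a \ge \sum_{k \in K} \mu_k$ whenever $c_{\tau(I_1),\ldots,\tau(I_{s-1})}^{\tau(K)} = 1$. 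Because $M - C = P \succeq 0$, the Courant--Fischer min-max principle gives $\mu_k \ge \nu_k$ for every $k$, so $\sum_{k \in K} \mu_k \ge \sum_{k \in K} \nu_k$; chaining inequalities delivers (iii).

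\textbf{Sufficiency.} Assume $(\lambda^1,\ldots,\lambda^{s-1},\nu)$ satisfies (i), (ii'), (iii). I reduce to the classical Horn problem via the following lifting statement.

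\textbf{Key Lemma.} There exists $\tilde{\nu} = (\tilde{\nu}_1 \ge \cdots \ge \tilde{\nu}_r) \in \R^r$ with $\tilde{\nu}_i \ge \nu_i$ for all $i$, $|\tilde{\nu}| = \sum_j |\lambda^j|$, and $(\lambda^1,\ldots,\lambda^{s-1},\tilde{\nu}) \in \mathsf{C}_r^s$.

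Granting the lemma: by the classical Horn theorem we obtain Hermitian $A_j, M$ with the prescribed eigenvalues and $\sum_j A_j = M$. Diagonalizing $M = U \op{diag}(\tilde{\nu}) U^*$ and setting $C := U\op{diag}(\nu) U^*$, one gets $M - C = U \op{diag}(\tilde{\nu} - \nu) U^* \succeq 0$ and $\op{eigs}(C) = \nu$, as desired.

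\textbf{Proof sketch of the Key Lemma.} Let $t := \sum_j |\lambda^j| - |\nu| \ge 0$. Consider the polytope
\[
P = \{\tilde{\nu} \in \R^r : \tilde{\nu}_1 \ge \cdots \ge \tilde{\nu}_r,\ \tilde{\nu}_i \ge \nu_i,\ |\tilde{\nu}| = |\nu|+t,\ \textstyle\sum_{k\in K}\tilde{\nu}_k \le \sum_j \sum_{a\in I_j}\lambda^j_a\ \forall (I_1,\ldots,I_{s-1},K) \text{ with } c=1\}.
\]
The plan is to show $P \ne \emptyset$ by Farkas/LP-duality: assume a nonnegative combination of the defining inequalities yields $0 \le -\epsilon$ for some $\epsilon > 0$, then the coefficients assemble into a cohomological identity on some $\op{Gr}(d,r)$ that, together with (iii), forces $\sum_j |\lambda^j| < |\nu|$, contradicting (ii'). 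Equivalently, I would proceed by induction on $t$: if $t=0$ take $\tilde\nu=\nu$ and apply classical Horn directly; if $t>0$ pick an index $i$ for which raising $\nu_i$ to $\nu_i+\varepsilon$ (small $\varepsilon>0$) preserves both monotonicity and all Horn inequalities of (iii), reducing the slack and iterating. The existence of such an index is where the hypothesis (iii) --- and, crucially, the absence of any tight Horn upper bound at $\tilde{\nu}=\nu$ --- is essential. This step is precisely the content that Anderson--Richmond--Yong's Proposition 2.1 is tailored to provide in the combinatorial setting, and I would invoke its underlying polyhedral idea.

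I expect the hard part to be this last existence argument: verifying that the ``slack budget'' $t$ can be distributed among the coordinates of $\tilde{\nu}$ without violating any Horn upper bound coming from (iii). The Fourier--Motzkin perspective --- viewing $\mathsf{EqC}_r^s$ as the projection of the classical Horn cone $\mathsf{C}_r^{s+1}$ under the constraint that the extra summand be negative semidefinite --- is what clarifies why only (ii') (the $d=r$ ``inequality'') and the classical (iii) survive the elimination.
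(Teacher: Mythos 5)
The paper does not reprove this theorem; it simply cites Fulton \cite{Fu2}, so there is no in-text proof to compare against. Your necessity direction is correct: the trace of the positive semidefinite matrix $\sum_j A_j - C$ gives (ii'), and combining classical Horn for $(\lambda^1,\ldots,\lambda^{s-1},\mu)$, where $\mu$ is the spectrum of $M=\sum_j A_j$, with the Weyl monotonicity $\mu_k\ge\nu_k$ coming from $M-C\succeq 0$, gives (iii). The reduction of sufficiency to your Key Lemma is also sound, as is the final step constructing $C=U\operatorname{diag}(\nu)U^*$ from $M=U\operatorname{diag}(\tilde\nu)U^*$.

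The problem is that the Key Lemma is not a reduction but a restatement: given the classical Horn theorem, the existence of $\tilde\nu$ is exactly equivalent to the existence of the majorization-witnessing matrices, so it carries the entire weight of the sufficiency direction, and your sketch of it has real gaps. ``Induction on $t$'' is not an induction, since $t$ is a nonnegative real; one needs a continuous/LP-type argument with a termination guarantee. More importantly, the justification you offer for finding an index to raise --- ``the absence of any tight Horn upper bound at $\tilde\nu=\nu$'' --- is not a hypothesis and is simply false in general. Take $r=2$, $s=3$, $\lambda^1=(1,0)$, $\lambda^2=(0,0)$, $\nu=(0,0)$: conditions (i), (ii'), (iii) hold with $t=1$, yet the Horn inequality $\lambda^1_2+\lambda^2_1\ge\nu_2$ is tight. (Here $\nu_1$ happens to be unblocked, so the lemma is still true, but one must \emph{prove} that whenever $t>0$ some index is unblocked; this is the crux.) The proposed Farkas argument ``assembling into a cohomological identity'' is asserted rather than carried out. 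Finally, invoking \cite{ARY} Proposition 2.1 here is circular: its proof rests on Fulton's theorem, and it is stated for the lattice cone $\mathsf{EqLR}$, not for $\mathsf{EqC}$. Supplying the argument that a coordinate of $\nu$ can always be increased when $t>0$ --- which is where all the Schubert-calculus input actually enters --- is precisely the content of Fulton's proof and is missing from yours.
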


\subsection{The Littlewood-Richardson cones}

Amazingly, Horn's inequalities also provide the solutions to the following problems in Schubert calculus. One can ask for which tuples $(\lambda^1,\hdots,\lambda^{s-1},\nu)$ the Littlewood-Richardson number $c_{\lambda^1,\hdots,\lambda^{s-1}}^\nu$ is nonzero, and the answer is in fact the same: 

\begin{theorem}[\cite{Kly,KT}]\label{lr}
Let $\lambda^j = (\lambda_1^j, \hdots , \lambda_r^j)$, $j=1,\hdots,s-1$ and $\nu = (\nu_1, \hdots ,\nu_r)$ be $s$ sequences of $r$ nonnegative integers each. Then $c_{\lambda^1,\hdots,\lambda^{s-1}}^\nu\ne 0$ if and only if (i), (ii), and (iii) hold above. 
\end{theorem}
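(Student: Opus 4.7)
The plan is to reduce the theorem to two classical inputs: the Hermitian/Schubert equivalence due to Klyachko (i.e., the content of Theorem~2.1 above together with its geometric proof), and the saturation theorem of Knutson--Tao. Concretely, I would prove the biconditional
$$
c_{\lambda^1,\ldots,\lambda^{s-1}}^{\nu} \neq 0 \iff (\lambda^1,\ldots,\lambda^{s-1},\nu) \in \mathsf{C}_r^s,
$$
and then invoke the already-stated Theorem~2.1 to translate membership in $\mathsf{C}_r^s$ into conditions (i)--(iii). Note that (i) is automatic from the hypothesis on partitions, and (ii) is forced by degree-counting in the graded ring $H^\ast(\op{Gr}(r,\C^n))$: a nonzero product requires $\sum_j |\lambda^j| = |\nu|$.

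For the direction $c \neq 0 \Rightarrow (\lambda^1,\ldots,\nu) \in \mathsf{C}_r^s$, I would argue geometrically. Nonvanishing of $c_{\lambda^1,\ldots,\lambda^{s-1}}^{\nu}$ means that translates of Schubert varieties $X_{\lambda^1},\ldots,X_{\lambda^{s-1}}, X_{\nu^\vee}$ by generic flags have nonempty intersection. Via Klyachko's construction, one associates to each such translate a Hermitian matrix (from a moment-map/coadjoint-orbit interpretation), with prescribed eigenvalues, whose sum is controlled by the intersection point. Nonempty intersection then produces Hermitian $A_1,\ldots,A_{s-1},C$ with $\sum A_j = C$ and the required eigenvalues, placing the tuple in $\mathsf{C}_r^s$.

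For the converse, suppose $(\lambda^1,\ldots,\lambda^{s-1},\nu) \in \mathsf{C}_r^s$ is integral. Klyachko's theorem in the other direction shows that some positive integer multiple $N \geq 1$ yields nonvanishing: $c_{N\lambda^1,\ldots,N\lambda^{s-1}}^{N\nu} \neq 0$. (This direction uses that scaling eigenvalues by $N$ corresponds to rescaling on the symplectic/GIT side, where integrality allows passage to a genuine invariant section.) This is precisely where $\mathsf{C}_r^s$ and the semigroup of nonvanishing LR data agree only up to scaling a priori. The final step is then the Knutson--Tao saturation theorem, which exactly asserts that $c_{N\lambda^1,\ldots}^{N\nu} \neq 0$ for some $N \geq 1$ forces $c_{\lambda^1,\ldots}^{\nu} \neq 0$; applied here, this yields the desired conclusion.

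The main obstacle, and genuinely the deep content, is the saturation step, since the preceding eigencone-to-LR correspondence naturally operates on the level of rational cones and requires saturation to descend to individual lattice points. The other subtlety is the case $s > 3$: one can either cite a multi-factor version of Klyachko's theorem directly, or reduce to $s = 3$ by iterated application, noting that the stable structure constants $c_{\lambda^1,\ldots,\lambda^{s-1}}^\nu$ expand as sums of products of ordinary Littlewood--Richardson coefficients with nonnegative integer multiplicities, so that nonvanishing and integrality are preserved through the reduction.
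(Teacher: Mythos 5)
Your proposal matches the paper exactly: the paper treats this theorem as a citation of Klyachko and Knutson--Tao, and its only elaboration is the parenthetical historical remark that Klyachko established $(\lambda^1,\hdots,\lambda^{s-1},\nu)\in\mathsf{LR}_r^s\cap\Z^{rs}\iff c_{N\lambda^1,\hdots,N\lambda^{s-1}}^{N\nu}\ne 0$ for some $N>0$, and that the Knutson--Tao Saturation Theorem then removes the $N$. Your decomposition into the eigencone-vs.-LR-semigroup correspondence followed by saturation (and your note on reducing $s>3$ to $s=3$) is precisely that argument.
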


This has the following equivariant analogue: $\op{Gr}(r,\C^n)$ has an action of $T = (\C^*)^n$, and $T$ fixes each Schubert variety. Moreover, the equivariant classes $[X_\lambda]$ once again form a basis for $H^*_T(\op{Gr}(r,\C^n))$, as a module over $\Z[t_1,\hdots,t_n] = H^*_T(\{pt\})$. We therefore can define structure ``coefficients'' (polynomials in the variables $t_i$) $C_{\lambda^1,\hdots,\lambda^{s-1}}^\nu$ by the rule 
$$
[X_{\lambda^1}]\cdots [X_{\lambda^{s-1}}] = \sum_{\nu} C_{\lambda^1,\hdots,\lambda^{s-1}}^\nu [X_\nu] \in H^*_T(\op{Gr}(r,\C^n)),
$$
once again assuming $n$ is large enough. The nonvanishing question for $C_{\lambda^1,\hdots,\lambda^{s-1}}^\nu$ was settled by D.~Anderson, E.~Richmond, and A.~Yong: 

\begin{theorem}[\cite{ARY}\footnote{technically, they cover the case $s=3$. By induction, one obtains the statement for all $s>3$.}]\label{elr}
Let $\lambda^j = (\lambda_1^j, \hdots , \lambda_r^j)$, $j=1,\hdots,s-1$ and $\nu = (\nu_1, \hdots ,\nu_r)$ be $s$ sequences of $r$ nonnegative integers each. Then $C_{\lambda^1,\hdots,\lambda^{s-1}}^\nu\ne 0$ if and only if (i), (ii'), and (iii) hold above, as well as 
\begin{enumerate}
\item[(iv)] $\nu_i\ge \lambda_i^j$ for every $i\in [r]$ and $j\in [s-1]$. 
\end{enumerate}
\end{theorem}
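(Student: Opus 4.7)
The plan is to argue by induction on $s \ge 3$; the base case $s=3$ is the main theorem of \cite{ARY}. For the inductive step, associativity in $H^*_T(\op{Gr}(r,\C^n))$ yields the factorization
$$
C^\nu_{\lambda^1,\ldots,\lambda^{s-1}} \;=\; \sum_{\mu} C^\mu_{\lambda^1,\ldots,\lambda^{s-2}}\cdot C^\nu_{\mu,\lambda^{s-1}},
$$
with $\mu$ ranging over partitions in the ambient box. Graham's positivity theorem ensures each equivariant structure constant is a polynomial in $t_i-t_{i+1}$ with nonnegative integer coefficients, so no cancellation can occur; hence $C^\nu_{\lambda^1,\ldots,\lambda^{s-1}}\ne 0$ if and only if some partition $\mu$ makes both factors nonzero. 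By the inductive hypothesis (applied to the first factor, of length $s-1$) and the base case (applied to the triple), this is in turn equivalent to the existence of a partition $\mu$ jointly satisfying (i)--(iv) for both $(\lambda^1,\ldots,\lambda^{s-2},\mu)$ and $(\mu,\lambda^{s-1},\nu)$.

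The forward direction is straightforward: conditions (i) and (iv) for the $s$-tuple follow by transitivity (in particular, $\nu_i\ge\mu_i\ge\lambda^j_i$ for $j\le s-2$), and (ii') follows by adding the two size inequalities. For (iii), given Horn data $(I_1,\ldots,I_{s-1},K)$ with $c^K_{I_\bullet}=1$, associativity in ordinary cohomology yields
$$
1 \;=\; c^K_{I_1,\ldots,I_{s-1}} \;=\; \sum_{J} c^J_{I_1,\ldots,I_{s-2}}\,c^K_{J,I_{s-1}},
$$
and since each summand is a nonnegative integer, exactly one $J_0$ contributes, with both factors equal to $1$. Chaining the corresponding Horn inequalities $\sum_{j\le s-2}\sum_{a\in I_j}\lambda^j_a\ge\sum_{k\in J_0}\mu_k$ and $\sum_{k\in J_0}\mu_k+\sum_{a\in I_{s-1}}\lambda^{s-1}_a\ge\sum_{k\in K}\nu_k$ produces the $s$-tuple Horn inequality attached to $(I_1,\ldots,I_{s-1},K)$.

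The main obstacle is the reverse direction: from (i), (ii'), (iii), (iv) for the $s$-tuple, construct the witness $\mu$. My approach is first to show that the real polytope $P\subset\R^r$ of vectors satisfying the subsidiary linear constraints on $\mu$ is nonempty, then to invoke saturation (furnished inductively by the smaller cases) to extract an integer point. Nonemptiness of $P$ is the crux and can be addressed via a Hermitian matrix argument: Fulton's theorem supplies matrices $A_1,\ldots,A_{s-1},C\ge 0$ with the prescribed eigenvalues and $\sum A_j\ge C$, and one can take $\mu$ to be the eigenvalue sequence of any Hermitian $M$ satisfying $C-A_{s-1}\le M\le A_1+\cdots+A_{s-2}$, a nonempty interval because $(A_1+\cdots+A_{s-2})-(C-A_{s-1})=\sum A_j-C\ge 0$. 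This choice realizes $(\lambda^1,\ldots,\lambda^{s-2},\mu)\in\mathsf{EqC}_r^{s-1}$ and $(\mu,\lambda^{s-1},\nu)\in\mathsf{EqC}_r^{3}$ simultaneously; the delicate point is refining $M$ so that $\mu$ also meets the dominance requirement (iv) at both levels, i.e. $\lambda^j\le\mu\le\nu$, which uses (iv) for the original $s$-tuple and Weyl monotonicity. Integrality of the resulting $\mu$ then follows from the rationality of the defining inequalities together with the saturation supplied by the inductive hypothesis.
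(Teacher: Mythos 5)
The paper itself gives no proof of this theorem: it is attributed to \cite{ARY} with only a footnote asserting the case $s>3$ follows ``by induction,'' so your job is to supply that induction, and your proposal is a genuine attempt at it.

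Your reduction via associativity and Graham positivity is sound: since each $C^\mu_{\lambda^1,\ldots,\lambda^{s-2}}$ and $C^\nu_{\mu,\lambda^{s-1}}$ is a polynomial in the simple roots $t_{i+1}-t_i$ with nonnegative integer coefficients, so is each product, and no cancellation can occur in the sum; hence nonvanishing of the $s$-fold constant is equivalent to existence of a partition $\mu$ making both factors nonzero. Your forward direction is essentially complete: transitivity gives (iv), adding the two size inequalities gives (ii'), and the associativity identity $1 = c^K_{I_1,\ldots,I_{s-1}} = \sum_J c^J_{I_1,\ldots,I_{s-2}}c^K_{J,I_{s-1}}$ forces a unique $J_0$ with both classical coefficients equal to $1$, so chaining the two Horn inequalities through $J_0$ yields the $s$-fold Horn inequality.

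The reverse direction, however, has a genuine gap that you flag but do not close. Choosing $M$ with $C-A_{s-1}\le M\le A_1+\cdots+A_{s-2}$ does put $(\lambda^1,\ldots,\lambda^{s-2},\mu)$ in $\mathsf{EqC}_r^{s-1}$ and $(\mu,\lambda^{s-1},\nu)$ in $\mathsf{EqC}_r^3$, i.e., gives (i), (ii'), (iii) for both subtuples, but it gives no control over (iv). The phrase ``Weyl monotonicity'' does not furnish $\lambda^j\subseteq\mu\subseteq\nu$: for instance, taking $M=A_1+\cdots+A_{s-2}$ the eigenvalues of $M$ can be far larger than $\nu$, and taking $M=C-A_{s-1}$ they can be negative. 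There is no reason an arbitrary point of the matrix interval has eigenvalues trapped between the $\lambda^j$'s and $\nu$. Your appeal to ``saturation supplied by the inductive hypothesis'' to pass from a real/rational $\mu$ to an integer partition is likewise not automatic: saturation controls scaling along a ray, not the existence of lattice points in a rational polytope, and in any case you have not shown the polytope of admissible $\mu$ is nonempty.

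The machinery already in the paper gives a much shorter reverse direction that sidesteps all of this, and you should use it. Given $(\lambda^1,\ldots,\lambda^{s-1},\nu)$ satisfying (i)--(iv), Fulton's theorem puts it in $\mathsf{EqLR}_r^s\cap\Z^{rs}$. By \cite[Proposition 2.1(B)]{ARY} (Proposition \ref{ARYp21B}) shrink $\lambda^{s-1}$ to an integer partition $\lambda^{s-1,\downarrow}\subseteq\lambda^{s-1}$ with $(\lambda^1,\ldots,\lambda^{s-2},\lambda^{s-1,\downarrow},\nu)\in\mathsf{LR}_r^s\cap\Z^{rs}$. Then $c^\nu_{\lambda^1,\ldots,\lambda^{s-2},\lambda^{s-1,\downarrow}}\ne 0$ by Theorem \ref{lr}, and by classical associativity and positivity there is a partition $\mu$ with $c^\mu_{\lambda^1,\ldots,\lambda^{s-2}}\ne 0$ and $c^\nu_{\mu,\lambda^{s-1,\downarrow}}\ne 0$. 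The first gives $C^\mu_{\lambda^1,\ldots,\lambda^{s-2}}\ne 0$. For the second, one has $\lambda^{s-1,\downarrow}\subseteq\lambda^{s-1}\subseteq\nu$, so \cite[Proposition 2.1(A)]{ARY} upgrades $c^\nu_{\mu,\lambda^{s-1,\downarrow}}\ne 0$ to $C^\nu_{\mu,\lambda^{s-1}}\ne 0$, and Graham positivity finishes. This avoids the Hermitian interval, the dominance refinement, and the integrality issue entirely, and it is almost certainly the induction the footnote has in mind.
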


Criterion (iv) is also written $\lambda^j\subseteq \nu$ (for every $j\in [s-1]$); i.e., the Young diagram for $\lambda^j$ fits inside the Young diagram for $\nu$. 

Let us therefore define two more cones, $\mathsf{LR}_r^s\subset \mathsf{C}_r^s$ and $\mathsf{EqLR}_r^s\subset \mathsf{EqC}_r^s$, as follows. 
Set
$$
\mathsf{LR}_r^s = \{(\lambda^1,\hdots,\lambda^{s-1},\nu)\in \mathsf{C}_r^s | \lambda^j_r\ge 0 \text{ for every } j \in [s-1]\}\footnote{the reader may notice we have omitted the requirement $\nu_r\ge 0$; this is because it follows from $\lambda^j_r\ge 0$, $\sum |\lambda^j| = |\nu|$, and one of the Horn inequalities.}
$$
and 
$$
\mathsf{EqLR}_r^s = \{(\lambda^1,\hdots,\lambda^{s-1},\nu)\in \mathsf{EqC}_r^s | \lambda^j_r\ge 0 \text{ and }\lambda^j\subseteq \nu \text{ for every } j \in [s-1]\}.
$$

Then Theorems \ref{lr} and \ref{elr} say that the partitions yielding nonvanishing (equivariant) structure constants are exactly the sets of lattice points $\mathsf{LR}_r^s\cap \Z^{rs}$ and $\mathsf{EqLR}_r^s\cap \Z^{rs}$, respectively. (On an important historical note, Klyachko showed that $(\lambda^1,\hdots,\lambda^{s-1},\nu)\in \mathsf{LR}_r^s\cap \Z^{rs}\iff c_{N\lambda^1,\hdots,N\lambda^{s-1}}^{N\nu}\ne 0$ for some $N>0$; the Saturation Theorem of Knutson and Tao resolved the conjecture that $c_{N\lambda^1,\hdots,N\lambda^{s-1}}^{N\nu}\ne 0\iff c_{\lambda^1,\hdots,\lambda^{s-1}}^{\nu}\ne 0$, thus proving Theorem \ref{lr}.)

\section{Belkale's algorithm for the rays of $\mathsf{LR}_r^s$}\label{belk}

In \cite{B2}, P.~Belkale considers the following rational cone: 
\begin{align*}
\Gamma_r(s)&:=\{(\lambda^1,\hdots,\lambda^{s-1},\lambda^s) | (\lambda^1,\hdots,\lambda^{s-1},-w_0\lambda^s) \in \mathsf{C}_r^s\text{ and each } |\lambda^j|=0\},  
\end{align*}
which parametrizes the possible eigenvalues for \emph{traceless} $r\times r$ Hermitian matrices $A_1,\hdots,A_s$ such that $A_1+\hdots+A_s = 0$. Here $w_0$ is the involutive permutation 
$$
w_0(\lambda_1,\hdots,\lambda_r) = (\lambda_r,\hdots,\lambda_1).
$$
Belkale describes the extremal rays that lie on an arbitrary \emph{Horn facet}, i.e., the face of $\Gamma_r(s)$ where one of the Horn inequalities (iii) holds with equality. In order to recall that description in our present notation, we introduce the following cone: 
$$
\mathsf{C}_{SL_r}^s:=\{(\lambda^1,\hdots,\lambda^{s-1},\nu)\in \mathsf{C}_r^s | \lambda^j_r= 0 \text{ for every } j \in [s-1]\},
$$
which is isomorphic to $\Gamma_r(s)$ via the Killing form isomorphism, after twisting the last entry by $-w_0$. For any $j\in [r]$, let $\omega_j$ denote the partition $(\underbrace{1,\hdots,1}_{j},\underbrace{0,\hdots,0}_{r-j})$. Then the linear map 
\begin{align*}
C_{SL_r}^s&\to \Gamma_r(s)\\
(\lambda^1,\hdots,\lambda^{s-1},\nu) &\mapsto \left(\lambda^1-\frac{|\lambda^1|}{r}\omega_r,\hdots, \lambda^{s-1}-\frac{|\lambda^{s-1}|}{r}\omega_r,-w_0\nu + \frac{|\nu|}{r}\omega_r\right)
\end{align*}
is well-defined and an isomorphism of rational cones with inverse 
$$
(\lambda^1,\hdots,\lambda^{s-1},\lambda^s) \mapsto \left(\lambda^1-\lambda^1_r\omega_r, \hdots, \lambda^{s-1}_r-\lambda^{s-1}\omega_r,-w_0\lambda^s - \sum_{j=1}^{s-1} \lambda_r^j \omega_r\right).
$$

\subsection{Relationships between the cones}

So far we have accumulated several related cones, which fit in this diagram:
$$
\begin{array}{ccccc}
\mathsf{C}_{SL_r}^s & \subset & \mathsf{LR}_r^s & \subset & \mathsf{C}_r^s\\
 & & \rotatebox{-90}{$\subset$} & & \rotatebox{-90}{$\subset$} \\ 
 & & & & \\
 & & \mathsf{EqLR}_r^s & \subset & \mathsf{EqC}_r^s
\end{array}
$$
(Though one could define a sixth cone that ``completes'' the lower left corner of the diagram, that cone does not appear to be helpful for our purposes.) 

The two vertical inclusions, as well as the left-most one, are inclusions of faces -- that is, the smaller cone is defined inside the bigger one by taking one or more linear inequality valid on the bigger cone and forcing it/them to hold with equality. Moreover, we have the following structural results.  

\begin{proposition}
For each $j\in [s-1]$, let $x_j = (0,\hdots,\underbrace{\omega_r}_{j^\text{th} \text{ position}},0,\hdots,0,\omega_r)\in \R^{rs}$. 

We have the following internal decompositions, the first of them direct: 
\begin{align*}
\mathsf{LR}_r^s &= \mathsf{C}_{SL_r}^s \oplus \bigoplus_{j=1}^{s-1} \R_{\ge 0}x_j\\
\mathsf{C}_r^s&= \mathsf{LR}_r^s + \bigoplus_{j=1}^{s-1} \R x_j\\
\mathsf{EqC}_r^s&= \mathsf{EqLR}_r^s + \bigoplus_{j=1}^{s-1} \R x_j
\end{align*}
\end{proposition}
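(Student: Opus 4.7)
The plan is to construct each decomposition explicitly, exploiting the fact that adding a multiple of $x_j$ corresponds, on the matrix side, to a scalar shift. Specifically, if $(A_1,\ldots,A_{s-1},C)$ is a tuple of Hermitian matrices with eigenvalues $(\lambda^1,\ldots,\lambda^{s-1},\nu)$ satisfying $\sum A_j = C$ (resp.\ $\sum A_j \ge C$), then replacing $A_j$ by $A_j + c_j I$ and $C$ by $C + (\sum_k c_k) I$ preserves the relation and shifts the eigenvalue tuple by exactly $\sum_j c_j x_j$. Thus adding any $\R$-linear combination of the $x_j$ preserves membership in $\mathsf{C}_r^s$ and in $\mathsf{EqC}_r^s$.

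For the first decomposition, given $v = (\lambda^1,\ldots,\lambda^{s-1},\nu) \in \mathsf{LR}_r^s$, I set $c_j = \lambda^j_r \ge 0$ and $u := v - \sum_j c_j x_j$. By the shift observation $u \in \mathsf{C}_r^s$, and by construction the $r$-th coordinate of the $j$-th block of $u$ is $0$, so $u \in \mathsf{C}_{SL_r}^s$. Directness is immediate: in any expression $v = u + \sum_j c_j x_j$ with $u \in \mathsf{C}_{SL_r}^s$ and $c_j \ge 0$, reading off the last coordinate of the $j$-th block forces $c_j = \lambda^j_r$, and then $u$ is determined. The second decomposition uses the same recipe with $c_j = \lambda^j_r$ (no longer required $\ge 0$): the shifted point lies in $\mathsf{C}_r^s$ and has $j$-th block ending in $0 \ge 0$, hence lies in $\mathsf{LR}_r^s$.

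The third decomposition is the most delicate, since $\mathsf{EqLR}_r^s$ demands not only $\lambda^j_r \ge 0$ but also the Young diagram containment $\lambda^j \subseteq \nu$ of condition (iv). Here I abandon the attempt to use the $c_j = \lambda^j_r$ recipe; instead, given $v \in \mathsf{EqC}_r^s$, I choose $c_j = -N$ for every $j$, where $N$ is a large positive real number. Then $u := v - \sum_j c_j x_j$ has $j$-th block $\lambda^j + N\omega_r$ and last block $\nu + (s-1)N\omega_r$. The last-coordinate condition becomes $\lambda^j_r + N \ge 0$, which holds for $N$ large; condition (iv) becomes $\lambda^j_i + N \le \nu_i + (s-1)N$, i.e., $\lambda^j_i - \nu_i \le (s-2)N$, which holds for $N$ large \emph{precisely because} $s \ge 3$. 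The Horn inequalities (iii) and the majorization inequality (ii') are invariant under this uniform shift, so $u \in \mathsf{EqLR}_r^s$.

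The main obstacle is the third decomposition, where the condition $\lambda^j \subseteq \nu$ prevents the straightforward shift from working; the key observation making it go through is that the common shift by $N$ on each factor creates a gap of $(s-2)N$ between the $j$-th block and the last block, which is positive exactly when $s \ge 3$, as assumed throughout. The other two decompositions are routine once the matrix-shift principle is in hand.
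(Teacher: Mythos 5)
Your proof is correct and uses essentially the same strategy as the paper: reduce to the smaller cone by adding/subtracting multiples of the $x_j$, interpreted on the matrix side as scalar shifts $A_j \mapsto A_j + c_j I$, with a large uniform shift to handle the equivariant case. The paper handles the first decomposition by sequentially subtracting each $\lambda^j_r x_j$ and re-checking the inequalities, and handles both remaining cases by adding $B(x_1+\cdots+x_{s-1})$ with $B\gg 0$; your version states the scalar-shift observation up front and uses it uniformly, and your remark that condition (iv) survives the shift precisely because the gap is $(s-2)N$ with $s\ge 3$ makes explicit a detail the paper leaves implicit, but the underlying argument is the same.
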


We postpone the straightforward proof until Section \ref{odds}. As a consequence, we get relationships among extremal rays: for example, the extremal rays of $\mathsf{LR}_r^s$ are the rays of $\mathsf{C}_{SL_r}^s$ together with $\{x_j|1\le j\le s-1\}$. Since $\mathsf{C}_r^s$ and $\mathsf{EqC}_r^s$ are not pointed cones (they contain the linear subspaces $\R x_j$), their sets of extremal rays are not well-defined. Nonetheless, they are generated over $\R_{\ge0}$ by the extremal rays of $\mathsf{LR}_r^s$ and $\mathsf{EqLR}_r^s$ (respectively) and $\{\pm x_j|1\le j\le s-1\}$. 

While Belkale's algorithm was developed for the Horn facets of $\mathsf{C}_{SL_r}^s$, it applies equally well to the Horn facets of $\mathsf{LR}_r^s$, as each $x_j$ belongs to every Horn facet and is in the image of every induction map. 

\subsection{The algorithm}

\begin{definition}
Suppose $I_1,\hdots,I_{s-1},K$ satisfy (\ref{lr=1}). The associated Horn facet $\mathcal{F}_{I_1,\hdots,I_{s-1}}^{K}$ is 
$$
\mathcal{F}_{I_1,\hdots,I_{s-1}}^{K}:= \left\{(\lambda^1,\hdots,\lambda^{s-1},\nu)| \sum_{j=1}^{s-1}\sum_{a\in I_j}\lambda_a^j = \sum_{k\in K}\nu_k\right\}\cap \mathsf{LR}_r^s.
$$
We will often write $\mathcal{F}$ for short, if the $d$-elements subsets of $[r]$ are understood. 
\end{definition}

Fix such a collection $I_1,\hdots,I_{s-1},K$ satisfying (\ref{lr=1}). The rays on $\mathcal{F}$ come in two types. The first are obtained as follows. Fix any $j\in [s-1]$ and an $a\in I_j$ such that $a+1\not\in I_j$, and $a<r$. Alternatively, let $j=s$ and find an $a\in K$ such that $a-1\not\in K$, but $a>1$. The pair $(j,a)$ is a ``type I ray datum.'' Modify just $I_j$ by swapping $a$ for $a+1$ (or $a$ for $a-1$ in case $j=s$) to produce a new collection $I_1',\hdots,I_{s-1}',K'$. 
From the choice of $j,a$, we get a ``type I'' ray $r(j,a) = (\lambda^1,\hdots,\lambda^{s-1},\nu)$ by this procedural definition: 
\begin{enumerate}
\item First, set $\lambda^k_r = 0$ for every $k\in [s-1]$. 
\item For every $k\in [s-1]$, if $b>1$ satisfies $b\in I_k'$ and $b-1\not\in I_k'$, set $I_k'' = I_k' \cup\{b-1\}\setminus \{b\}$ and $I_\ell'' = I_\ell'$ for $\ell\ne k$, $K'' = K'$. Then set 
$$
\lambda^k_{b-1} -\lambda^k_b = c_{I_1'',\hdots,I_{s-1}''}^{K''}.
$$
If $b>1$ satisfies $b\not\in I_k'$ or $b-1\in I_k'$, then set $\lambda^k_{b-1} - \lambda^k_b = 0$. Thus we have determined $\lambda^1,\hdots,\lambda^{s-1}$ completely. 
\item If $c<r$ satisfies $c\in K'$ and $c+1\not\in K'$, set $I_k'' = I_k'$ for all $k$ and $K'' = K'\cup\{c+1\}\setminus\{c\}$. Then set 
$$
\nu_c-\nu_{c+1}= c_{I_1'',\hdots,I_{s-1}''}^{K''}.
$$
If $c<r$ satisfies $c\not\in K'$ or $c+1\in K'$, then set $\nu_c-\nu_{c+1} = 0$. 
\item All that remains is to find $\nu_r$. This can be done by using the requirement 
$$
|\lambda^1|+\hdots+|\lambda^{s-1}|= |\nu|.
$$
However, there is an alternative rule. If $r\not\in K'$, then $\nu_r=0$; otherwise, set $K'' = K'\cup\{r+1\}\setminus\{r\}$, and $I_k'' = I_k'$ for all $k$. Then 
$$
\nu_r = c_{I_1'',\hdots,I_{s-1}''}^{K''},
$$
where technically this Littlewood-Richardson coefficient is calculated in the cohomology of a bigger Grassmannian ($\op{Gr}(d,\C^{r+1})$ will suffice). 
\end{enumerate}

\begin{remark}
The author is indebted to G.~Orelowitz for observing the alternative rule in step (4), which allows one to treat (4) as a new case of (3), defining ``$\nu_{r+1}=0$''. 
\end{remark}

\begin{theorem}[Belkale]\label{typeI}
Every $r(j,a)$ produced by the preceding procedure generates an extremal ray of $\mathsf{C}_{SL_r}^s\subset \mathsf{LR}_r^s$ which lies on the face $\mathcal{F}$. They form a linearly independent set, enumerated $\{r_1,\hdots,r_q\}$, say, and they span a simplicial subcone $\mathcal{F}_1 = \prod_{i=1}^q\R_{\ge0}r_i \subseteq \mathcal{F}$. 
\end{theorem}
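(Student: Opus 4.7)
The plan is to verify three things about each constructed point $r(j,a)$: (a) it lies in $\mathsf{C}_{SL_r}^s\cap \mathcal{F}$; (b) it spans an extremal ray of $\mathsf{LR}_r^s$; and (c) the full family $\{r(j,a)\}$ is linearly independent, hence generates a simplicial cone.

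For (a), step (1) places $r(j,a)$ in $\mathsf{C}_{SL_r}^s$ by imposing $\lambda^k_r=0$. All prescribed differences $\lambda^k_{b-1}-\lambda^k_b$ and $\nu_c-\nu_{c+1}$ are nonnegative, being Littlewood--Richardson coefficients, so condition (i) holds. The content equality in (ii) requires the two prescriptions for $\nu_r$ in step (4) to agree; I would verify this by interpreting each nonzero difference as an intersection number on a smaller Grassmannian and matching total codegrees, using the identity $|\tau(K)|=\sum_j |\tau(I_j)|$ guaranteed by $c_{I_1,\hdots,I_{s-1}}^K=1$. The Horn equality defining $\mathcal{F}$ itself then follows by telescoping the sums $\sum_{a\in I_k}\lambda^k_a$ and $\sum_{k\in K}\nu_k$ in terms of the prescribed differences, collapsing them onto the single LR coefficient $c_{I_1,\hdots,I_{s-1}}^K=1$.

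The main obstacle will be showing that all remaining Horn inequalities (iii) hold at $r(j,a)$. My preferred approach is to exploit the product structure of a Horn facet: modulo the linear summand $\bigoplus \R x_j$, the face $\mathcal{F}$ decomposes as a product of two smaller tensor cones, corresponding to Schubert calculus on $\op{Gr}(d,\C^r)$ and on $\op{Gr}(r-d,\C^r)$. Under this decomposition, the nonzero differences prescribed by steps (2)--(3) correspond precisely to extremal rays of the smaller cones pulled back to $\mathsf{LR}_r^s$, whence $r(j,a)$ automatically inherits all Horn inequalities from the smaller-rank setting. Alternatively, one could induct on $r$, using the fact that each nonzero difference $\lambda^k_{b-1}-\lambda^k_b$ or $\nu_c-\nu_{c+1}$ is computed via a one-swap subproblem on a smaller Grassmannian for which the full Horn system is already known to hold.

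For extremality in (b), count the homogeneous linear equations satisfied by $r(j,a)$: the $s-1$ equations $\lambda^k_r=0$; the ``slack'' equations $\lambda^k_{b-1}=\lambda^k_b$ (whenever $b\not\in I_k'$ or $b-1\in I_k'$) and $\nu_c=\nu_{c+1}$ (whenever $c\not\in K'$ or $c+1\in K'$); and the Horn equality defining $\mathcal{F}$ itself. A combinatorial count using $|I_k'|=|K'|=d$ produces exactly $rs-1$ such equations, which are coordinate equations in a natural basis and hence manifestly linearly independent, forcing $r(j,a)$ to span a $1$-dimensional face of $\mathsf{LR}_r^s$. For (c), observe that $r(j,a)$ has a strictly positive value $1$ at the ``active position'' $\lambda^j_a-\lambda^j_{a+1}$ (or $\nu_{a-1}-\nu_a$ when $j=s$), whereas every other $r(j',a')$ vanishes there: a direct inspection of how $I_j'$ or $K'$ is constructed from alternative data $(j',a')$ shows that at least one of the slack conditions holds at the active position of $(j,a)$ with respect to the data for $(j',a')$. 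This triangularity yields linear independence, and the simplicial conclusion is immediate.
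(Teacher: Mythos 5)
The paper does not actually prove Theorem~\ref{typeI}: it is imported wholesale from Belkale's paper \cite{B2}, and the only internal commentary is that linear independence is recorded as Lemma~\ref{ortho} (i.e.\ \cite{B2}*{Lemma 4.2}). So there is no in-paper proof for your argument to be checked against; what can be checked is consistency with what the paper does record, and internal soundness. On the first count, your part (c) is precisely the orthogonality statement of Lemma~\ref{ortho} --- the ``active position'' difference equals $1$ for $r(j,a)$ itself and $0$ for every other $r(\wh j,\hat a)$ --- so you have identified the same triangularity the paper relies on.

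As a self-contained proof sketch, though, the load-bearing steps are asserted rather than established. For (a), you take for granted that the procedure is well-defined: that after the initial swap the tuple $(I_1',\dots,I_{s-1}',K')$ still has $c_{I_1',\dots,I_{s-1}'}^{K'}=1$, that the various $I_k''$, $K''$ are genuine $d$-element subsets (e.g.\ that the swap $b\mapsto b-1$ or $c\mapsto c+1$ does not collide with an existing element), and that the two formulas for $\nu_r$ agree; these need proof, not just intent. The factorization heuristic for (iii) is the right spirit (it connects to Proposition~\ref{facto} and the product structure of $\mathcal{F}$ modulo $\bigoplus\R x_j$), but ``pulled back extremal rays of the smaller cones'' is a little slippery here --- the type I rays are exactly the ones \emph{killed} by the induction map, so they are not literally images of smaller-rank rays; you would instead want to argue that $r(j,a)$ satisfies every Horn inequality by degenerating to a one-swap subproblem as you suggest in the alternative. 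Finally, in (b) the claimed count of exactly $rs-1$ linearly independent equations is plausible (and matches small examples), but you have not given the combinatorial argument, and the equations are not all coordinate equations --- the content equality and the Horn equality for $(I_1,\dots,I_{s-1},K)$ must be included and shown independent of the slack/vanishing equalities. Until those three gaps are filled, this is an outline of Belkale's theorem, not a proof.
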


In fact, linear independence follows from \cite{B2}*{Lemma 4.2}: 

\begin{lemma}\label{ortho}
Say $r(j,a) = (\lambda^1,\hdots,\lambda^{s-1},\nu)$ and $r(\wh j,\hat a) = (\hat \lambda^1,\hdots,\hat \lambda^{s-1},\hat \nu)$ are two distinct type I rays on $\mathcal{F}$. (For simplicity assume $j,\wh j<s$; analogous statements hold if $j=s$ or $\wh j=s$ or both.) Then $\lambda_a^j-\lambda_{a+1}^j=1$ and $\lambda_{\hat a}^{\wh j}-\lambda_{\hat a+1}^{\wh j} = 0$; likewise $\hat \lambda_{\hat a}^{\wh j}-\hat \lambda_{\hat a+1}^{\wh j}=1$ and $\hat \lambda_{a}^j - \hat \lambda_{a+1}^j=0$.
\end{lemma}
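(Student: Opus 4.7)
The plan is to unwind the procedural definition of the type I rays and verify the two claims by a direct case analysis. The key observation is that step (2) of the construction sets each successive difference $\lambda^k_{b-1}-\lambda^k_b$ equal to a specific Littlewood-Richardson coefficient determined by a single swap $b \leftrightarrow b-1$ inside the modified collection $I_1',\ldots,I_{s-1}'$ associated to the ray being computed; if the swap is illegal (i.e.\ $b\notin I_k'$ or $b-1\in I_k'$), the difference is zero by fiat. The problem therefore reduces to identifying, for the specific pairs $(k,b)=(j,a+1)$ and $(k,b)=(\hat j,\hat a+1)$, whether the swap condition holds in the collection $I_1',\ldots,I_{s-1}'$ corresponding to the ray $r(j,a)$.

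For the first equality, I would observe that for $r(j,a)$ we have $I_j' = I_j\cup\{a+1\}\setminus\{a\}$, so at $b=a+1$ both $b\in I_j'$ and $b-1\notin I_j'$; the swap in step (2) produces $I_j'' = I_j$ (all other sets unchanged), yielding $\lambda^j_a-\lambda^j_{a+1} = c_{I_1,\ldots,I_{s-1}}^K = 1$ by the hypothesis (\ref{lr=1}) defining $\mathcal{F}$.

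For the vanishing $\lambda^{\hat j}_{\hat a}-\lambda^{\hat j}_{\hat a+1}=0$, I split on whether $\hat j=j$. If $\hat j\ne j$, then $I_{\hat j}'=I_{\hat j}$ is unchanged by passing to the modified collection, and the type-I datum hypothesis $\hat a+1\notin I_{\hat j}$ immediately kills the swap condition at $b=\hat a+1$. If $\hat j=j$ with $\hat a\ne a$, then the two type-I datum conditions, imposed on $(j,a)$ and $(j,\hat a)$ inside the common set $I_j$, rule out $\hat a=a\pm 1$ (each case would contradict one of the membership requirements on $I_j$ at $a$ or $a+1$); so $|\hat a-a|\ge 2$, whence membership of $\hat a+1$ in $I_j'$ coincides with membership in $I_j$, and once more $\hat a+1\notin I_j$ kills the swap. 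The symmetric statement for $\hat\lambda$ is obtained by interchanging the roles of $(j,a)$ and $(\hat j,\hat a)$. When $j=s$ or $\hat j=s$, the same argument runs using step (3) and $\nu$ in place of step (2) and the $\lambda^k$, with the endpoint $\nu_r$ handled uniformly by the alternative rule noted in the preceding remark.

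The main obstacle is purely bookkeeping: one must track which index set is modified by which ray, which indices are swapped, and in what direction ($a\mapsto a+1$ on the $\lambda$ side versus $a\mapsto a-1$ on the $\nu$ side). There is no mathematical depth beyond careful case enumeration and some watchfulness at the boundary indices $b=1$ and $b=r+1$.
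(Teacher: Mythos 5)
Your argument is correct and is exactly the direct verification that Lemma~\ref{ortho} invites: the paper itself gives no proof, simply citing \cite{B2}*{Lemma 4.2}. You correctly identify that at $b=a+1$ the swap in step~(2) undoes the modification $I_j\mapsto I_j'$, recovering $c_{I_1,\ldots,I_{s-1}}^K=1$, while for $(\hat j,\hat a)\neq(j,a)$ the membership test at $b=\hat a+1$ fails (in the subcase $\hat j=j$ because the two type~I conditions force $|\hat a-a|\ge 2$, so the $a\leftrightarrow a+1$ swap does not disturb membership of $\hat a+1$), and your remark that the $j=s$ or $\hat j=s$ cases go through with step~(3) in place of step~(2) is accurate.
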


This leads one to define the subcone 
$$
\mathcal{F}_2:= \left\{(\lambda^1,\hdots,\lambda^{s-1},\nu) \in \mathcal{F} \left|\begin{array}{c} \lambda^j_a-\lambda^j_{a+1}=0 \text{ for every type I datum }(j,a),j<s; \\ 
\nu_{a-1}-\nu_a =0 \text{ for every type I datum }(s,a)
\end{array}\right.\right\}.
$$
Clearly the addition map 
$$
\mathcal{F}_1\times \mathcal{F}_2 \to \mathcal{F}
$$
is an injection of rational cones, given the above lemma and the definition of $\mathcal{F}_2$. In \cite[Proposition 4.3]{B2}, we find that this map is also surjective. Therefore the remaining extremal rays of $\mathcal{F}$ are just the extremal rays of $\mathcal{F}_2$. For this, we have a surjective linear map onto $\mathcal{F}_2$ from a cone whose rays we can expect to know inductively. 

We'll define that map momentarily. First, for a $d$-element subset $I$ of $[r]$ and a partition $\lambda$, we define
$$
\lambda_I = (\lambda_{i_1}\ge \lambda_{i_2}\ge \hdots\ge\lambda_{i_d}).
$$
We also write $\bar I$ for the complement of $I$ inside $[r]$. 
Next, define the invertible map 
\begin{align*}
\pi:\R^{rs} &\to \R^{ds}\times \R^{(r-d)s}\\
(\lambda^1,\hdots,\lambda^{s-1},\nu)&\mapsto (\lambda^1_{I_1},\hdots,\lambda^{s-1}_{I_{s-1}},\nu_K),(\lambda^1_{\bar I_1},\hdots,\lambda^{s-1}_{\bar I_{s-1}},\nu_{\bar K}).
\end{align*}

The proof of the following result can be found in \cite{Fu}*{Proposition 8} and is also a consequence of the factorization rule of \cite{KTT}. 

\begin{proposition}\label{facto}The above $\pi$ restricts to a map 
$$
\pi:\mathcal{F} \to \mathsf{LR}_d^s\times \mathsf{LR}_{r-d}^s.
$$
\end{proposition}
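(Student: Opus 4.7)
The plan is to verify each defining condition (i), (ii), (iii) of $\mathsf{LR}_d^s$ and $\mathsf{LR}_{r-d}^s$ at the image point $\pi(\lambda^1,\hdots,\lambda^{s-1},\nu)$ for an arbitrary $(\lambda^1,\hdots,\lambda^{s-1},\nu)\in\mathcal{F}$. The weakly-decreasing condition (i) is immediate, since each of $\lambda^j_{I_j},\nu_K,\lambda^j_{\bar I_j},\nu_{\bar K}$ is a subsequence of a weakly decreasing sequence. Nonnegativity of the last coordinate in each factor reduces to $\lambda^j_r\ge 0$ (built into $\mathsf{LR}_r^s$) and $\nu_r\ge 0$ (which, as noted in the excerpt, already follows from $\mathsf{LR}_r^s$). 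For condition (ii), the Horn facet equation $\sum_j\sum_{a\in I_j}\lambda^j_a=\sum_{k\in K}\nu_k$ is exactly $\sum_j |\lambda^j_{I_j}|=|\nu_K|$; subtracting this from the total sum $\sum_j|\lambda^j|=|\nu|$ valid on $\mathsf{LR}_r^s$ yields $\sum_j|\lambda^j_{\bar I_j}|=|\nu_{\bar K}|$.

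The substantive step is the Horn inequalities (iii). For the first factor, fix any $J_1,\hdots,J_{s-1},L\subseteq [d]$ of common size $d'<d$ with $c_{\tau(J_1),\hdots,\tau(J_{s-1})}^{\tau(L)}=1$ in $H^*(\op{Gr}(d',\C^d))$. Writing $(I_j)_a$ for the $a$-th smallest element of $I_j$ and $(K)_l$ for the $l$-th smallest element of $K$, form the composed $d'$-subsets
$$
I_j\ast J_j:=\{(I_j)_a:a\in J_j\}\subseteq [r], \qquad K\ast L:=\{(K)_l:l\in L\}\subseteq [r].
$$
The composition/induction principle for Schubert classes --- Fulton's Proposition 8 in \cite{Fu}, equivalent to the factorization rule of \cite{KTT} --- guarantees
$$
c_{\tau(I_1\ast J_1),\hdots,\tau(I_{s-1}\ast J_{s-1})}^{\tau(K\ast L)}\ne 0
$$
in $H^*(\op{Gr}(d',\C^r))$, so the corresponding Horn inequality is valid on $\mathsf{LR}_r^s$:
$$
\sum_{j=1}^{s-1}\sum_{b\in I_j\ast J_j}\lambda^j_b \;\ge\; \sum_{k\in K\ast L}\nu_k.
$$
Unfolding the definitions, the left-hand side equals $\sum_j\sum_{a\in J_j}(\lambda^j_{I_j})_a$ and the right-hand side equals $\sum_{l\in L}(\nu_K)_l$, which is precisely the Horn inequality required of $(\lambda^1_{I_1},\hdots,\lambda^{s-1}_{I_{s-1}},\nu_K)$ for membership in $\mathsf{LR}_d^s$. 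The complementary factor is handled identically, applying the same composition principle to $\bar I_j\ast J_j'$ and $\bar K\ast L'$ for triples of subsets of $[r-d]$ with $c=1$ in the appropriate smaller Grassmannian.

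The main obstacle is the composition/induction rule that $c_{\tau(I_\bullet)}^{\tau(K)}=1$ together with $c_{\tau(J_\bullet)}^{\tau(L)}\ne 0$ forces $c_{\tau(I_\bullet\ast J_\bullet)}^{\tau(K\ast L)}\ne 0$. This lies well inside the classical Schubert-calculus literature, and since the excerpt explicitly invites reliance on \cite{Fu}*{Proposition 8} and \cite{KTT}, the remainder of the argument is bookkeeping around these references.
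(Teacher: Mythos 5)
Your proof is correct, and in spirit it is the same route the paper takes: the paper does not give an argument of its own but simply cites \cite{Fu}*{Proposition 8} (equivalently the factorization rule of \cite{KTT}), and your proof is a fleshed-out version of exactly that citation. The elementary bookkeeping for (i), nonnegativity, and (ii) is handled cleanly — in particular, the observation that the facet equality is precisely $\sum_j |\lambda^j_{I_j}|=|\nu_K|$, with the complementary equality obtained by subtraction from $\sum_j|\lambda^j|=|\nu|$, is the right reduction.

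Two small points are worth making explicit so the argument is airtight. First, your composition principle invokes $c_{\tau(I_\bullet\ast J_\bullet)}^{\tau(K\ast L)}\ne 0$, but condition (iii) of the theorem is stated only for $c=1$; you are implicitly using Klyachko's version, namely that \emph{all} Horn inequalities indexed by nonvanishing LR coefficients are valid on $\mathsf{C}_r^s$ (the $c=1$ inequalities being the irredundant sublist). That is standard, but deserves a sentence. Second, for the complementary factor you apply the same composition principle to $\bar I_\bullet,\bar K$; this requires $c_{\tau(\bar I_1),\hdots,\tau(\bar I_{s-1})}^{\tau(\bar K)}=1$ as well, which holds because $\tau(\bar I)$ is the conjugate partition of $\tau(I)$ (up to the usual $\op{Gr}(d,r)\leftrightarrow\op{Gr}(r-d,r)$ identification) and LR coefficients are invariant under simultaneous conjugation; you state the second factor is ``handled identically'' but this hypothesis should be flagged. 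With those two remarks inserted, the argument is complete and matches the content of the cited references.
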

Although this restriction of $\pi$ is not necessarily surjective, we do clearly have 
$$
\pi^{-1} (\mathsf{LR}_d^s\times \mathsf{LR}_{r-d}^s) \subseteq \R\mathcal{F}.
$$
Finally let 
$$
p_2:\R\mathcal{F} \to \R\mathcal{F}_2
$$
be the projection onto the second factor. Then define the induction map $\op{Ind}$ to be $p_2\circ\pi^{-1}$. 

\begin{theorem}[Belkale]\label{typeII}
The linear map 
$$
\op{Ind} = p_2\circ \pi^{-1}: \mathsf{LR}_d^s \times \mathsf{LR}_{r-d}^s \to \mathcal{F}_2
$$
is well-defined and surjective. 
\end{theorem}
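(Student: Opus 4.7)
The plan is to verify well-definedness and surjectivity separately, with surjectivity essentially immediate from the construction and well-definedness absorbing all the substantive content. For surjectivity, take $x \in \mathcal{F}_2 \subseteq \mathcal{F}$: Proposition \ref{facto} gives $\pi(x) \in \mathsf{LR}_d^s \times \mathsf{LR}_{r-d}^s$, and since $x \in \mathcal{F}_2 \subseteq \R\mathcal{F}_2$ is fixed by the projection $p_2$, we find $\op{Ind}(\pi(x)) = p_2(\pi^{-1}(\pi(x))) = p_2(x) = x$, so every element of $\mathcal{F}_2$ lies in the image.

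For well-definedness, fix $(\mu,\eta) \in \mathsf{LR}_d^s \times \mathsf{LR}_{r-d}^s$ and set $z = \pi^{-1}(\mu,\eta) \in \R\mathcal{F}$. By Lemma \ref{ortho} the type I rays form a basis of $\R\mathcal{F}_1$ that is ``orthogonal'' with respect to the type I differences, so in the direct sum $\R\mathcal{F} = \R\mathcal{F}_1 \oplus \R\mathcal{F}_2$ one has the explicit formula
$$p_2(z) = z - \sum_{(j,a)} c_{j,a}\,r(j,a),$$
where $c_{j,a}$ is the type I difference of $z$ at position $(j,a)$ (i.e.\ $z^j_a - z^j_{a+1}$ for $j<s$, or $z^s_{a-1}-z^s_a$ for $j=s$). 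Because belonging to $\R\mathcal{F}_2$ already forces every type I difference to vanish, showing $p_2(z) \in \mathcal{F}_2$ is equivalent to showing $p_2(z) \in \mathcal{F}$.

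To establish the latter, I would exhibit a nonnegative combination $g = \sum a_{j,a}\,r(j,a) \in \mathcal{F}_1$ with $a_{j,a} \ge 0$ large enough that $z + g \in \mathcal{F}$. Granted such a $g$, Belkale's surjective addition map $\mathcal{F}_1 + \mathcal{F}_2 = \mathcal{F}$ (\cite[Proposition 4.3]{B2}) writes $z + g = f_1' + f_2'$ with $f_1' \in \mathcal{F}_1$, $f_2' \in \mathcal{F}_2$, and uniqueness of the $\R\mathcal{F}_1 \oplus \R\mathcal{F}_2$ decomposition forces $f_2' = p_2(z) \in \mathcal{F}_2$. To produce $g$, one picks each $a_{j,a}$ large enough that $z + g$ is weakly decreasing in every coordinate: the only differences of $z$ that can violate weak decrease are at type I positions (the others compare consecutive entries inside a single partition $\mu^k$ or $\eta^k$ and are thus nonnegative), and by Lemma \ref{ortho} each $r(j,a)$ shifts precisely the $(j,a)$ type I difference upward. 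Positivity $\lambda^k_r \ge 0$ is inherited from the nonnegativity of the bottom entries of $\mu$, $\eta$, and the type I rays.

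The main obstacle is then to verify that $z + g$ satisfies every Horn inequality (iii) defining $\mathsf{LR}_r^s$. Here the factorization structure underlying Proposition \ref{facto} (the Knutson--Tao--Woodward / KTT factorization rule) is the decisive tool: restricted to $\R\mathcal{F}$, any Horn inequality for $\mathsf{LR}_r^s$ decomposes as a nonnegative combination of the facet-defining equality for $(I_1,\ldots,I_{s-1},K)$ together with matching Horn-type inequalities on $\mathsf{LR}_d^s$ (evaluated at the $\mu$-coordinates of $z+g$) and on $\mathsf{LR}_{r-d}^s$ (evaluated at the $\eta$-coordinates), both of which hold by hypothesis. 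Since $z$ satisfies the facet-defining equality of $\mathcal{F}$ automatically and every type I ray lies on $\mathcal{F}$, the sum $z + g$ satisfies it too, and the above decomposition supplies the remaining inequalities. This is essentially the content of Belkale's \cite[Proposition 4.3]{B2} translated into our language; the delicate step is confirming that the factorization remains valid on the full linear span $\R\mathcal{F}$ rather than only on $\mathcal{F}$ itself.
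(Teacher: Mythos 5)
The paper does not actually prove Theorem~\ref{typeII}: it is attributed to Belkale and cited from \cite{B2}, and the paper goes on to treat it (and \cite{B2}*{Proposition 4.3}) as black boxes in the proof of the equivariant Theorem~\ref{indhat}. So there is no internal proof to compare yours against; I will instead assess the argument on its own.

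Your surjectivity argument is correct and is the same one the paper uses for the equivariant analogue: $\pi$ supplies a section over $\mathcal{F}_2$, since $\mathcal{F}_2 \subseteq \mathcal{F}$ and Proposition~\ref{facto} applies.

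The well-definedness argument, however, has a concrete gap at the step where you produce $g$. You claim ``the only differences of $z$ that can violate weak decrease are at type I positions,'' with the parenthetical justification that the others compare consecutive entries inside a single $\mu^k$ or $\eta^k$. That is false: for $j<s$ there are boundary positions $(a,a+1)$ with $a \notin I_j$ and $a+1 \in I_j$, where $z^j_a$ comes from $\eta^j$ and $z^j_{a+1}$ from $\mu^j$; such pairs are \emph{not} type I data, yet the difference can certainly be negative. Symmetrically, for $j=s$ the type I data capture only the positions $a\in K$, $a-1\notin K$, not the complementary boundary $a-1\in K$, $a\notin K$. For instance, take $r=2$, $s=3$, $I_1=\{2\}$, $I_2=\{1\}$, $K=\{2\}$; the only type I data are $(2,1)$ and $(3,2)$, neither involving position $(1,2)$ of $\lambda^1$, yet $\mu=(1;0;1)$, $\eta=(0;0;0)$ gives $z^1=(0,1)$, a violation at that very position. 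The theorem is saved only because the type I rays have additional nonzero differences at these overlooked positions (their construction in step (2) of the ray procedure explicitly puts mass there); but then your Lemma~\ref{ortho}-based reasoning no longer controls the effect of adding $g$ coordinate by coordinate, and the claimed construction of $g$ is not established.

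There is also a circularity worry you should be aware of: given \cite{B2}*{Proposition 4.3} (the isomorphism $\mathcal{F}_1\times\mathcal{F}_2\cong\mathcal{F}$), the existence of some $g\in\mathcal{F}_1$ with $z+g\in\mathcal{F}$ is \emph{equivalent} to the statement $p_2(z)\in\mathcal{F}_2$, which is exactly the theorem. So reducing the theorem to ``produce such a $g$'' defers all the content, and the Horn-inequality portion of the argument --- which you yourself flag as delicate and unverified on $\R\mathcal{F}$ --- remains the substance of the proof. Belkale's actual argument in \cite{B2} is not a direct manipulation of Horn inequalities of this kind; it rests on geometric transversality and semistability considerations, which is why the paper cites rather than reproves it.
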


The surjectivity means that every extremal ray of $\mathcal{F}_2$ is the image of an extremal ray of $\mathsf{LR}_d^s\times \mathsf{LR}_{r-d}^s$. Moreover, the extremal rays of $\mathsf{LR}_d^s\times \mathsf{LR}_{r-d}^s$ are all of the form $a\times 0$ or  $0\times b$ where $a$ (resp., $b$) is an extremal ray of $\mathsf{LR}_d^s$ (resp., $\mathsf{LR}_{r-d}^s$). As both $d$ and $r-d$ are strictly smaller than $r$, we get an inductive algorithm for finding the extremal rays of $\mathsf{LR}_r^s$, starting with $\mathsf{LR}_1^s$. 

\begin{remark}
Actually, Belkale's $\op{Ind}$ has a proper subspace of $\mathsf{LR}_d^s\times \mathsf{LR}_{r-d}^s$ as its domain, but it nonetheless surjects onto $\mathcal{F}_2\cap \mathsf{C}_{SL_r}^s$. With the larger domain comes a larger kernel (see Corollary \ref{noway}) but a more transparent generalization to $\mathsf{EqLR}_r^s$. 
\end{remark}

\section{Adaptation of the algorithm for $\mathsf{EqLR}_r^s$}

Theorems \ref{typeI} and \ref{typeII} can be straightforwardly adapted to find the extremal rays of the Horn facets of $\mathsf{EqLR}_r^s$. Once again assume that $I_1,\hdots,I_{s-1},K$ satisfy (\ref{lr=1}). Define
$$
\hat{\mathcal{F}} = \left\{(\lambda^1,\hdots,\lambda^{s-1},\nu)\left| ~\sum_{j=1}^{s-1}\sum_{a\in I_j}\lambda_a^j = \sum_{k\in K}\nu_k\right.\right\}\cap \mathsf{EqLR}_r^s.
$$
Note that $\mathcal{F} = \hat{\mathcal{F}}\cap \mathsf{LR}_r^s$, and since $\mathcal{F}$ is a facet of $\hat{\mathcal{F}}$, every extremal ray of $\mathcal{F}$ is a ray of $\hat{\mathcal{F}}$ as well. 

Therefore type I rays on $\mathcal{F}$ are naturally considered type I rays on $\hat{\mathcal{F}}$; i.e., Theorem \ref{typeI} holds verbatim with $\hat{\mathcal{F}}$ instead of $\mathcal{F}$. It is the set of type II rays which will increase from $\mathcal{F}$ to $\hat{\mathcal{F}}$. 
Define 
$$
\hat{\mathcal{F}}_2:=\left\{(\lambda^1,\hdots,\lambda^{s-1},\nu) \in \hat{\mathcal{F}} \left|\begin{array}{c} \lambda^j_a-\lambda^j_{a+1}=0 \text{ for every type I datum }(j,a),j<s; \\ 
\nu_{a-1}-\nu_{a} =0 \text{ for every type I datum }(s,a)
\end{array}\right.\right\}.
$$
Extremal rays of $\hat{\mathcal{F}}_2$ will be called type II rays for $\hat{\mathcal{F}}$. 

Before we come to the proof of the decomposition $\hat{\mathcal{F}} = \mathcal{F}_1\times \hat{\mathcal{F}_2}$, let us recall an important consequence of \cite[Proposition 2.1(B)]{ARY} (they state it for $s=3$ and for integer vectors, but the statement below follows easily). For a pair of vectors $\lambda,\mu$ we use $\lambda\subseteq \mu$ to mean $\lambda_i\le \mu_i$ for every $i\in [r]$. 

\begin{proposition}[Anderson-Richmond-Yong]\label{ARYp21B}
Suppose $(\lambda^1,\hdots,\lambda^{s-1},\nu)\in \mathsf{EqLR}_r^s$. Then for any $j\in [s-1]$, one can find a $\lambda^{j,\downarrow}$ such that
\begin{enumerate}
\item $\lambda^{j,\downarrow}\subseteq \lambda^j$,
\item $(\lambda^1,\hdots,\lambda^{j,\downarrow},\hdots,\lambda^{s-1},\nu)\in \mathsf{EqLR}_r^s$, and
\item $\sum_{k\ne j} |\lambda^k| + |\lambda^{j,\downarrow}| = |\nu|.$
\end{enumerate}
\end{proposition}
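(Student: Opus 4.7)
The plan is to bootstrap from the $s=3$ case of \cite{ARY}*{Prop.~2.1(B)} (which covers integer data) by induction on $s$, using Graham positivity of equivariant Schubert multiplication, and then to pass from integer to real vectors via a scaling and closure argument. For the latter reduction, conditions (1)--(3) are preserved under simultaneous positive scaling of the entire tuple, and since $\mathsf{EqLR}_r^s$ is a rational polyhedral cone and the candidate set for $\lambda^{j,\downarrow}$ lies in the compact box $\{\mu : 0 \le \mu \le \lambda^j\}$, the real case follows by taking limits of an approximating integral sequence.

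For the inductive step, fix integral $(\lambda^1, \ldots, \lambda^{s-1}, \nu) \in \mathsf{EqLR}_r^s$ with $s \ge 4$ and $j \in [s-1]$. Theorem \ref{elr} gives $C^\nu_{\lambda^1, \ldots, \lambda^{s-1}} \ne 0$ in $H^*_T(\op{Gr}(r, \C^n))$, and associativity of the cup product yields
$$
C^\nu_{\lambda^1, \ldots, \lambda^{s-1}} \;=\; \sum_\mu C^\mu_{(\lambda^k)_{k \ne j}} \cdot C^\nu_{\lambda^j, \mu}.
$$
Graham positivity --- each equivariant structure constant is a polynomial in $\alpha_i := t_i - t_{i+1}$ with nonnegative integer coefficients --- prevents cancellation, so some intermediate $\mu$ has both factors nonzero; equivalently, $(\lambda^1, \ldots, \widehat{\lambda^j}, \ldots, \lambda^{s-1}, \mu) \in \mathsf{EqLR}_r^{s-1}$ and $(\lambda^j, \mu, \nu) \in \mathsf{EqLR}_r^3$. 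Apply the base case $s=3$ to the triple $(\lambda^j, \mu, \nu)$ to obtain $\lambda^{j,\downarrow} \subseteq \lambda^j$ with $(\lambda^{j,\downarrow}, \mu, \nu) \in \mathsf{EqLR}_r^3$ and $|\lambda^{j,\downarrow}| + |\mu| = |\nu|$. A second invocation of Graham positivity --- now applied to $[X_{\lambda^{j,\downarrow}}] \cdot \prod_{k \ne j} [X_{\lambda^k}]$ --- shows that the $[X_\nu]$-coefficient is nonzero (the $\mu$-summand contributes positively), yielding condition (2); condition (1) is inherited directly from the base case.

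The main obstacle is enforcing condition (3) exactly. The construction above only gives
$$
\sum_{k \ne j} |\lambda^k| + |\lambda^{j,\downarrow}| \;\ge\; |\mu| + |\lambda^{j,\downarrow}| \;=\; |\nu|,
$$
with equality iff $|\mu| = \sum_{k \ne j} |\lambda^k|$, i.e., iff the intermediate $\mu$ appears in the \emph{classical} expansion of $\prod_{k \ne j}[X_{\lambda^k}]$. I would try to arrange this by selecting $\mu$ of maximal size among partitions satisfying both required nonvanishing conditions. If the maximal such $\mu$ still has $|\mu| < \sum_{k \ne j} |\lambda^k|$, one iterates: apply the inductive hypothesis to the $(s-1)$-tuple $(\lambda^1, \ldots, \widehat{\lambda^j}, \ldots, \lambda^{s-1}, \mu)$ at some index $k \ne j$ to tighten the size deficit there, then propagate the improvement back through the $s=3$ step. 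Organizing this as a double induction on the pair $(s, \sum_k |\lambda^k| - |\nu|)$ in lexicographic order should close the argument, since each round either strictly decreases $s$ or strictly decreases the total size deficit.
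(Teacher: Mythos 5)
The paper itself supplies no proof here: it cites \cite{ARY}*{Prop.~2.1(B)} (stated there for $s=3$ and integer data) and asserts that the generalization ``follows easily,'' so there is no argument of the paper's to compare against. Your plan --- base case from ARY, induct on $s$ via associativity and Graham positivity, then pass from integers to reals by scaling and a compactness/limit argument --- is the natural route, and the first and third steps are sound. But you have correctly put your finger on the real problem, and your proposed ``double induction'' repair cannot succeed, because the proposition \emph{as stated} is actually false once $s\ge 4$. Condition~(3) forces $|\lambda^{j,\downarrow}| = |\nu| - \sum_{k\ne j}|\lambda^k|$, and this right-hand side can be strictly negative, which is incompatible with $\lambda^{j,\downarrow}$ being a partition as required by condition~(2). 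The simplest instance: $r=1$, $s=4$, $\lambda^1=\lambda^2=\lambda^3=\nu=1$; this tuple lies in $\mathsf{EqLR}_1^4$, yet for every $j$ conditions (2) and (3) jointly demand $\lambda^{j,\downarrow}=-1<0$. A nondegenerate example on a genuine Horn facet is $r=2$, $s=4$, $(\lambda^1,\lambda^2,\lambda^3,\nu)=\bigl((2,0),(0,0),(2,2),(2,2)\bigr)$ with $j=2$: condition~(1) forces $\lambda^{2,\downarrow}=(0,0)$, while condition~(3) would need $|\lambda^{2,\downarrow}|=-2$. No reorganization of the induction can manufacture a negative-size partition, so the lexicographic double induction you sketch is not salvageable as a proof of the statement as phrased.

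What \emph{is} correct --- and what ARY~2.1(B) gives at $s=3$, where $\sum_{k\ne j}|\lambda^k|=|\lambda^{k_0}|\le|\nu|$ holds automatically because $\lambda^{k_0}\subseteq\nu$ --- is that for every integer $t$ with
\[
\max\Bigl(0,\;|\nu|-\sum_{k\ne j}|\lambda^k|\Bigr) \;\le\; t \;\le\; |\lambda^j|
\]
there exists $\lambda^{j,\downarrow}\subseteq\lambda^j$ with $|\lambda^{j,\downarrow}|=t$ and $(\lambda^1,\ldots,\lambda^{j,\downarrow},\ldots,\lambda^{s-1},\nu)\in\mathsf{EqLR}_r^s$. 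Condition~(3) is thus attainable exactly when $\sum_{k\ne j}|\lambda^k|\le|\nu|$, and the proposition should carry that hypothesis (or be restated in the range form above). Your associativity argument, aimed at the corrected lower endpoint rather than the naive $|\nu|-\sum_{k\ne j}|\lambda^k|$, is the right engine for the inductive step; the ``select $\mu$ of maximal size'' idea is a reasonable way to approach the case $\sum_{k\ne j}|\lambda^k|\le|\nu|$, though one must still verify that some $\mu$ of full classical size $\sum_{k\ne j}|\lambda^k|$ has $C^\nu_{\lambda^j,\mu}\ne 0$, which is the genuine content left to prove. You should also flag that each downstream invocation of the proposition (for instance, producing $\tilde x_\ell\in\mathsf{LR}_r^s$ for \emph{arbitrary} $\ell\in[s-1]$ in the proof of Theorem~\ref{protheo}) needs to be checked against the hypothesis $\sum_{k\ne\ell}|\lambda^k|\le|\nu|$.
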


In other words, every element of $\mathsf{EqLR}_r^s$ possesses several ``shadows'' in $\mathsf{LR}_r^s$ obtainable by shrinking any single one of the first $s-1$ partitions. 

\begin{theorem}\label{protheo}
The addition map 
$$
\mathcal{F}_1\times \hat{\mathcal{F}}_2\to \hat{\mathcal{F}}
$$
is an isomorphism of cones. 
\end{theorem}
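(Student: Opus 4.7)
I would mirror the proof of Belkale's classical Proposition 4.3 (which establishes $\mathcal{F}_1\times\mathcal{F}_2\cong\mathcal{F}$) and use Proposition \ref{ARYp21B} to bridge to the equivariant setting.

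\textbf{Injectivity} follows directly from Lemma \ref{ortho}. In any expression $y=\sum_i c_i r_i + z$ with $z\in\hat{\mathcal{F}}_2$, the coefficient of $r(j,a)$ is forced to equal the distinguished difference $\lambda^j_a-\lambda^j_{a+1}$ (if $j<s$) or $\nu_{a-1}-\nu_a$ (if $j=s$) of $y$: Lemma \ref{ortho} makes this difference $1$ on $r(j,a)$ and $0$ on every other $r_i$, while the defining equations of $\hat{\mathcal{F}}_2$ make it vanish on $z$. So the $c_i$, and hence $z$, are uniquely determined.

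\textbf{Surjectivity} is the main content. Given $y=(\lambda^1,\ldots,\lambda^{s-1},\nu)\in\hat{\mathcal{F}}$, set $c_{(j,a)}$ equal to the distinguished difference of $y$ at each type I datum (nonnegative by monotonicity), take $x=\sum c_{(j,a)}\,r(j,a)\in\mathcal{F}_1$, and define $z=y-x$. Linearity gives the Horn equality on $z$, and the construction together with Lemma \ref{ortho} gives the vanishing-difference conditions defining $\hat{\mathcal{F}}_2$. The crucial remaining task is to prove $z\in\mathsf{EqLR}_r^s$. To reduce to the classical case, fix any $j_0\in[s-1]$ and apply Proposition \ref{ARYp21B} to produce a shadow $\tilde y=(\lambda^1,\ldots,\lambda^{j_0,\downarrow},\ldots,\lambda^{s-1},\nu)\in\mathsf{EqLR}_r^s$ with trace equality. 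Since $\lambda^{j_0,\downarrow}\subseteq\lambda^{j_0}$, the Horn LHS weakly decreases on passing from $y$ to $\tilde y$ while the RHS is unchanged; the Horn inequality on $\tilde y$ then forces equality to persist, so $\tilde y\in\mathcal{F}$. Belkale's classical decomposition yields $\tilde y=\tilde x+\tilde z$ with $\tilde x\in\mathcal{F}_1$ and $\tilde z\in\mathcal{F}_2\subseteq\mathsf{EqLR}_r^s$, and the Belkale coefficients $\tilde c_{(k,a)}$ agree with $c_{(k,a)}$ for every $k\ne j_0$ (and for $k=s$), since $y$ and $\tilde y$ coincide away from the $j_0$-th slot.

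\textbf{The main obstacle} is closing the gap between $z$ and this classical decomposition. The algebra gives
$$
z \;=\; \tilde z \;+\; (y-\tilde y) \;-\; \sum_a \bigl(c_{(j_0,a)} - \tilde c_{(j_0,a)}\bigr)\,r(j_0,a),
$$
where $y-\tilde y$ is a nonnegative vector supported in the $j_0$-th slot, but the correction coefficients $c_{(j_0,a)}-\tilde c_{(j_0,a)}=\mu_a-\mu_{a+1}$ (with $\mu=\lambda^{j_0}-\lambda^{j_0,\downarrow}\ge 0$) need not have a definite sign. Condition (ii') of Theorem \ref{elr} is automatic, since $\mathcal{F}_1\subseteq\mathsf{LR}_r^s$ preserves the excess $\sum|\lambda^k|-|\nu|$; but conditions (i), (iii), and especially (iv) (the containment $z^j\subseteq z^\nu$) are the analytical heart of the argument. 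I expect that resolving this will require either a second application of Proposition \ref{ARYp21B} now applied to $z$ itself, or -- more elegantly -- exploiting the freedom in choosing $\lambda^{j_0,\downarrow}$ to arrange $\mu_a=\mu_{a+1}$ at every type I datum $(j_0,a)$, which would eliminate the correction outright and make the membership $z\in\mathsf{EqLR}_r^s$ immediate.
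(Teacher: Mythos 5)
You correctly identify the injectivity argument (Lemma \ref{ortho}) and the right bridging tool (Proposition \ref{ARYp21B}), and you correctly locate where the difficulty lives: the $j_0$-th slot is perturbed both by passing to the shadow and by the change in decomposition coefficients. But you do not close that gap, and the two resolutions you float are not the ones that work. The paper's actual proof differs from your plan in two structural ways that together dissolve the obstacle.

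First, the paper does not subtract the entire sum $\sum c_{(j,a)}r(j,a)$ at once. It peels off a \emph{single} type I ray at a time: pick one datum $(j,a)$ with $\beta:=\lambda^j_a-\lambda^j_{a+1}>0$, show $x-\beta\,r(j,a)\in\hat{\mathcal{F}}$, and induct on the number of active type I data. This keeps the bookkeeping local. Second, and more importantly, the paper uses not one shadow but the whole family $\tilde x_\ell$ for $\ell=1,\ldots,s-1$. The key observation --- which you miss and which makes your ``no definite sign'' worry moot --- is that because $x$ lies \emph{on} the Horn facet and $\lambda^{\ell,\downarrow}\subseteq\lambda^\ell$, the sandwich $\sum\sum\lambda^k_b\ge\sum\sum\mu^k_b\ge\sum\nu_k$ collapses to equalities, forcing $\mu^\ell_a=\lambda^\ell_a$ for all $a\in I_\ell$. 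Hence each $\tilde x_\ell$ still lies on $\mathcal{F}$, and the difference $\mu^j_a-\mu^j_{a+1}$ at the chosen datum is $\ge\beta$ (with equality if $\ell\ne j$). Applying Belkale's Proposition~4.3 gives $\tilde x_\ell-\beta\,r(j,a)\in\mathcal{F}$ for every $\ell$. The inequalities (i), (iii), (iv) for $x-\beta\,r(j,a)$ are then checked \emph{componentwise}, choosing for each check a value of $\ell$ that leaves the relevant component untouched: to verify that the $k$-th component is a partition or that $z^{(k)}\subseteq z^{(s)}$, use $\ell\ne k$ so that $\lambda^k=\mu^k$ and the classical conclusion transfers verbatim. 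This freedom to vary $\ell$ per inequality is precisely what avoids ever having to control the shadowed slot.

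Your alternative suggestions do not rescue the single-shadow approach. Applying Proposition~\ref{ARYp21B} to $z$ itself is circular, since that proposition requires $z\in\mathsf{EqLR}_r^s$ as a hypothesis. And choosing $\lambda^{j_0,\downarrow}$ so that $\mu_a=\mu_{a+1}$ at every type I datum is not something Proposition~\ref{ARYp21B} (or its source, \cite[Proposition 2.1]{ARY}) guarantees; it only controls $|\lambda^{j_0,\downarrow}|$ and containment, not individual consecutive differences. So as written, your proof has a genuine gap at the surjectivity step; the fix is the paper's one-ray-at-a-time induction together with the $\ell$-indexed family of shadows.
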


\begin{proof}
Once again, this map is clearly injective given Lemma \ref{ortho} and the definition of $\hat{\mathcal{F}}_2$. Now let us show it is surjective. 

Let $x = (\lambda^1,\hdots,\lambda^{s-1},\nu)\in \hat{\mathcal{F}}$ be arbitrary. If $x$ already belongs to $\hat{\mathcal{F}}_2$, then we are done. Otherwise, there exists some type I datum $(j,a)$ such that $\lambda^j_a-\lambda^j_{a+1}>0$ (in case $j<s$) or $\nu_{a-1}-\nu_{a}>0$ (in case $j=s$). Set $\beta = \lambda^j_a-\lambda^j_{a+1}$ or $\beta =\nu_{a-1}-\nu_a$, depending on the case. We will show that 
\begin{align}\label{shrink}
x - \beta r(j,a)\in \hat{\mathcal{F}};
\end{align}
then by induction on the number of such type I data, we can assume $x-\beta r(j,a)\in \mathcal{F}_1\times \hat{\mathcal{F}}_2$, from which the result follows. In fact, to show (\ref{shrink}) it suffices to show $x-\beta r(j,a)\in \mathsf{EqLR}_r^s$, since their difference clearly lies in the hyperplane spanned by $\hat{\mathcal{F}}$. 

Toward that end, we begin with a simple observation. Write 
$
r(j,a) = (\bar \lambda^1,\hdots,\bar\lambda^{s-1},\bar \nu).
$
Fix any $\ell\in [s-1]$ and let $\tilde x_\ell = (\lambda^1,\hdots,\lambda^{\ell,\downarrow},\hdots,\lambda^{s-1},\nu)$ be an element of $\mathsf{LR}_r^s$ as guaranteed by Proposition \ref{ARYp21B}. Set $\mu^k = \lambda^k$ for $k\ne \ell$ and $\mu^\ell = \lambda^{\ell,\downarrow}$, so in this notation 
$$
\tilde x_\ell = (\mu^1,\hdots,\mu^{s-1},\nu).
$$
Observe that $\tilde x_\ell$ is automatically on $\hat{\mathcal{F}}$ since $x$ is, and 
$$
\sum_{k=1}^{s-1}\sum_{b\in I_k} \lambda^k_b \ge \sum_{k=1}^{s-1}\sum_{b\in I_k} \mu^k_b  \ge \sum_{k\in K}\nu_k.
$$
Moreover, this shows us that $\lambda^\ell_a = \mu^\ell_a$ whenever $a\in I_\ell$. This means that $\mu^j_a-\mu^j_{a+1}\ge \beta$ (equality holds if $\ell\ne j$, but if $\ell=j$ then $\mu^\ell_{a+1}$ could be smaller while $\mu^\ell_a$ is unchanged.)

So $\tilde x_\ell$ still has $\mu^j_a-\mu^j_{a+1}$ (or $\nu_{a-1}-\nu_a$) at least equal to $\beta$. By \cite[Proposition 4.3]{B2}, we know that 
$$
\tilde x_\ell - \beta r(j,a)\in \mathcal{F}\subset \hat{\mathcal{F}},
$$
and this holds for every $\ell\in [s-1]$. 

Finally, we show that $x-\beta r(j,a)\in \mathsf{EqLR}_r^s$ by verifying each of inequalities (i) (along with nonnegativity), (ii'), (iii), and (iv). 

\begin{enumerate}
\item[(i)] Let $k\in [s-1]$. Then $\lambda^k = \mu^k$ from $\tilde x_\ell$ as long as $\ell\ne k$, so choose some such $\ell$. Because $\tilde x_\ell - \beta r(j,a)$ belongs to $\mathsf{LR}_r^s$, $\mu^k-\beta \bar\lambda^k$ is a partition; therefore $\lambda^k-\beta \bar \lambda^k$ is a partition. 
For any $\ell$, $\nu - \beta \bar \nu$ is a partition for the same reason. 
\item[(ii')] This inequality follows by subtracting $\beta\sum_{k=1}^{s-1} |\bar\lambda^k| = \beta |\bar \nu|$ from $\sum_{k=1}^{s-1} |\lambda^k| \ge |\nu|$.
\item[(iii)] Let $(J_1,\hdots,J_{s-1},L)$ parametrize a Horn inequality (i.e., $c_{J_1,\hdots,J_{s-1}}^{L} = 1$). Fix an arbitrary $\ell\in [s-1]$. Then since $\tilde x_\ell - \beta r(j,a)$ satisfies this Horn inequality, we have 
$$
\sum_{k=1}^{s-1}\sum_{b\in J_k} \lambda^k_b - \beta \bar\lambda^k_b \ge \sum_{k=1}^{s-1}\sum_{b\in J_k} \mu^k_b - \beta \bar\lambda^k_b  \ge \sum_{k\in L}\nu_k.
$$
\item[(iv)] Let $k\in [s-1]$ and choose $\ell\ne k$. Since $\tilde x_\ell - \beta r(j,a)\in \mathsf{LR}_r^s$, we know that 
$$
\mu^k - \beta \bar\lambda^k \subseteq \nu-\beta \bar\nu.
$$
Since $\lambda^k = \mu^k$, the needed inequalities follow. 
\end{enumerate}\end{proof}

\begin{remark}
The proof shows that the addition map is surjective even on lattice points. 
\end{remark}

So just like in the classical setting, to find the extremal rays of $\hat{\mathcal{F}}$ it now suffices to find the extremal rays of $\hat{\mathcal{F}}_2$ (the type II rays). Recall the definition of $\pi$ from earlier: 
\begin{align*}
\pi:\R^{rs} &\to \R^{ds}\times \R^{(r-d)s}\\
(\lambda^1,\hdots,\lambda^{s-1},\nu)&\mapsto (\lambda^1_{I_1},\hdots,\lambda^{s-1}_{I_{s-1}},\nu_K),(\lambda^1_{\bar I_1},\hdots,\lambda^{s-1}_{\bar I_{s-1}},\nu_{\bar K}).
\end{align*}
Akin to Proposition \ref{facto}, we have the following generalization in the equivariant setting. 
\begin{proposition}\label{TBC}The map $\pi$ restricts to 
$$
\pi:\hat{\mathcal{F}} \to \mathsf{LR}_d^s\times \mathsf{EqLR}_{r-d}^s.
$$
\end{proposition}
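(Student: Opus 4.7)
The plan is to verify each of the four conditions (i), (ii'), (iii), (iv) of Theorem \ref{elr} for the second factor of $\pi(x)$, and the classical conditions (i), (ii), (iii) of Theorem \ref{lr} for the first factor, for arbitrary $x = (\lambda^1, \ldots, \lambda^{s-1}, \nu) \in \hat{\mathcal{F}}$. The two main tools will be the shadows $\tilde x_\ell \in \mathsf{LR}_r^s \cap \mathcal{F}$ furnished by Proposition \ref{ARYp21B} (these lie on $\mathcal{F}$ since they satisfy the same linear Horn equation cutting out $\hat{\mathcal{F}}$) together with the classical factorization Proposition \ref{facto}.

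For the first factor, I would use the key observation from the proof of Theorem \ref{protheo}: when $\tilde x_\ell = (\mu^1, \ldots, \mu^{s-1}, \nu)$ is a shadow with $\mu^\ell \subseteq \lambda^\ell$ and $\mu^k = \lambda^k$ for $k \neq \ell$, one has $\mu^\ell_a = \lambda^\ell_a$ for every $a \in I_\ell$. Consequently $\pi(\tilde x_\ell)_1 = \pi(x)_1$, and Proposition \ref{facto} applied to $\tilde x_\ell \in \mathcal{F}$ forces $\pi(x)_1 \in \mathsf{LR}_d^s$.

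For the second factor $\pi(x)_2 = (\lambda^1_{\bar I_1}, \ldots, \lambda^{s-1}_{\bar I_{s-1}}, \nu_{\bar K})$, condition (i) is immediate since each $\lambda^k$ and $\nu$ is a nonnegative partition, so any restriction to an increasing index subset is a nonnegative partition. Condition (ii') follows by subtracting the Horn equality $\sum_k \sum_{a \in I_k} \lambda^k_a = \sum_{b \in K} \nu_b$ defining $\hat{\mathcal{F}}$ from $\sum_k |\lambda^k| \ge |\nu|$. For condition (iii), I would fix any Horn datum $(J_1, \ldots, J_{s-1}, L)$ at rank $r-d$ and pick any shadow $\tilde x_\ell$; Proposition \ref{facto} yields $\pi(\tilde x_\ell)_2 \in \mathsf{LR}_{r-d}^s$, giving the corresponding inequality with $\mu^k_{(\bar I_k)_b}$ in place of $\lambda^k_{(\bar I_k)_b}$. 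Since $\mu^k \subseteq \lambda^k$ coordinatewise (with equality when $k \ne \ell$), replacing $\mu^k$ by $\lambda^k$ only enlarges the left-hand side, so the inequality persists.

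The main obstacle is condition (iv): $\lambda^k_{\bar I_k} \subseteq \nu_{\bar K}$ for each $k \in [s-1]$. The shadows alone cannot deliver this, since the classical cone $\mathsf{LR}_{r-d}^s$ carries no Young-diagram containment condition. The extra input I would use is combinatorial: $c_{I_1, \ldots, I_{s-1}}^K = 1$ implies $\tau(I_k) \subseteq \tau(K)$ inside the $d \times (r-d)$ rectangle (the case $s=3$ is standard, and for $s>3$ one iterates through a chain of nonzero intermediate Littlewood--Richardson factors). This containment translates to $(I_k)_p \le K_p$ componentwise; taking complements in $[r]$ reverses the inequality, so writing $\bar I_k = \{a_1 < \ldots < a_{r-d}\}$ and $\bar K = \{b_1 < \ldots < b_{r-d}\}$, we get $a_p \ge b_p$ for all $p$. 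Combined with $\lambda^k$ weakly decreasing and $\lambda^k \subseteq \nu$, this yields $\lambda^k_{a_p} \le \lambda^k_{b_p} \le \nu_{b_p}$, which is exactly $\lambda^k_{\bar I_k} \subseteq \nu_{\bar K}$.
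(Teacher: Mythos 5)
Your proof is correct, but it takes a more hands-on route than the paper. The paper's proof is much shorter: it produces a single shadow $\tilde x = (\lambda^1,\dots,\lambda^{j,\downarrow},\dots,\lambda^{s-1},\nu)$ via Proposition~\ref{ARYp21B}, applies Proposition~\ref{facto} to land $\pi(\tilde x)$ in $\mathsf{LR}_d^s\times\mathsf{LR}_{r-d}^s$, cites \cite{ARY}*{Lemma 2.7} for the containment $\lambda^j_{\bar I_j}\subseteq \nu_{\bar K}$, and then invokes \cite{ARY}*{Proposition 2.1(A)} --- the ``growing'' counterpart of the shrinking result you used --- to restore the second factor from $\lambda^{j,\downarrow}_{\bar I_j}$ to $\lambda^j_{\bar I_j}$ in one stroke. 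That single application of Proposition 2.1(A) replaces your separate verifications of (i), (ii'), (iii), and (iv). By contrast, you reprove condition (iv) from scratch: you observe that $c_{I_1,\dots,I_{s-1}}^K\ne 0$ forces $\tau(I_k)\subseteq\tau(K)$, hence $(I_k)_p\le K_p$ componentwise, and after complementation $(\bar I_k)_p\ge(\bar K)_p$, which combined with $\lambda^k$ weakly decreasing and $\lambda^k\subseteq\nu$ gives $\lambda^k_{\bar I_k}\subseteq\nu_{\bar K}$. This is a legitimate and self-contained proof of what the paper simply cites from \cite{ARY}*{Lemma 2.7}. The trade-off is transparency versus brevity: your argument makes the underlying combinatorics visible and avoids relying on ARY's growing lemma, while the paper's argument is shorter and mirrors the structure of the proof of Theorem~\ref{protheo}.
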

The proof of this fact can be found in \cite{Fu2}*{Claim, pg. 30}. 
We also provide a short argument based on Proposition \ref{facto} and \cite[Proposition 2.1]{ARY}. 
\begin{proof}
Suppose $(\lambda^1,\hdots,\lambda^{s-1},\nu)\in \mathsf{EqLR}_r^s$ satisfies the Horn inequality given by $(I_1,\hdots,I_{s-1},K)$ with equality. 
By Proposition \ref{ARYp21B} we know that we can find an $\tilde x = (\lambda^1,\hdots,\lambda^{j,\downarrow},\hdots,\lambda^{s-1},\nu)\in \mathsf{LR}_r^s$. Moreover, we showed in the previous proof that $\tilde x\in \mathcal{F}$, and that $\lambda^{j,\downarrow}_{I_j} = \lambda^j_{I_j}$. Applying Proposition \ref{facto}, we have 
$$
\pi(\tilde x) = (\lambda^1_{I_1},\hdots,\lambda^{s-1}_{I_{s-1}},\nu_K)\times (\lambda^1_{\bar I_1},\hdots,\lambda^{j,\downarrow}_{\bar I_j}, \hdots, \lambda^{s-1}_{\bar I_{s-1}},\nu_{\bar K}) \in \mathsf{LR}_d^s\times \mathsf{LR}_{r-d}^s.
$$
Since $(\lambda^1_{\bar I_1},\hdots,\lambda^{j,\downarrow}_{\bar I_j}, \hdots, \lambda^{s-1}_{\bar I_{s-1}},\nu_{\bar K})\in \mathsf{LR}_{r-d}^s$, and since $\lambda^j_{\bar I_j}\subseteq \nu_{\bar K}$ (by \cite[Lemma 2.7]{ARY}), we can apply \cite[Proposition 2.1(A)]{ARY} to conclude that 
$
(\lambda^1_{\bar I_1},\hdots, \lambda^{s-1}_{\bar I_{s-1}},\nu_{\bar K})\in \mathsf{EqLR}_{r-d}^s.
$
\end{proof}

Once again, even though $\pi$ is not surjective, it is true that $\pi^{-1}(\mathsf{LR}_d^s\times \mathsf{EqLR}_{r-d}^s)\subseteq \R \hat{\mathcal{F}}$. Let 
$$
\hat p_2 : \R\hat{\mathcal{F}}\to \R\hat{\mathcal{F}}_2
$$
be the second projection, and define $\wh{\op{Ind}}= \hat p_2 \circ\pi^{-1}$. Note that, if $\iota$ denotes the inclusion map $\mathcal{F}_2\subset \hat{\mathcal{F}}_2$, we have $\wh{\op{Ind}}\big|_{\mathsf{LR}_d^s\times \mathsf{LR}_{r-d}^s} = \iota\circ \op{Ind}$.

\begin{theorem}\label{indhat}
The linear map 
$$
\wh{\op{Ind}} = \hat p_2\circ\pi^{-1}: \mathsf{LR}_d^s\times \mathsf{EqLR}_{r-d}^s\to \hat{\mathcal{F}}_2
$$
is well-defined and surjective.
\end{theorem}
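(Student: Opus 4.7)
My plan is to treat well-definedness (the image lies in $\hat{\mathcal{F}}_2$, not just in $\R\hat{\mathcal{F}}_2$) and surjectivity separately. Surjectivity is immediate from Proposition \ref{TBC}: given $y \in \hat{\mathcal{F}}_2 \subseteq \hat{\mathcal{F}}$, that proposition places $\pi(y)$ in $\mathsf{LR}_d^s \times \mathsf{EqLR}_{r-d}^s$, and then
\[
\wh{\op{Ind}}(\pi(y)) = \hat p_2(\pi^{-1}\pi(y)) = \hat p_2(y) = y,
\]
since $y$ already lies in $\R\hat{\mathcal{F}}_2$ where $\hat p_2$ acts as the identity.

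For well-definedness, given $(\alpha, \beta) \in \mathsf{LR}_d^s \times \mathsf{EqLR}_{r-d}^s$, I first set $z = \pi^{-1}(\alpha, \beta)$ and observe that $z$ satisfies the Horn equality defining $\hat{\mathcal{F}}$: condition (ii) for $\alpha \in \mathsf{LR}_d^s \subseteq \mathsf{C}_d^s$ gives $\sum_j \sum_{a \in I_j} z^j_a = \sum_j |\alpha^j| = |\alpha^s| = \sum_{k \in K} z^s_k$. Hence $z \in \R\hat{\mathcal{F}}$, and $\hat p_2(z) \in \R\hat{\mathcal{F}}_2$ is defined, with the type I equalities built in. The remaining task is to verify $\hat p_2(z) \in \mathsf{EqLR}_r^s$ by checking conditions (i), (ii'), (iii), and (iv).

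My plan is to reduce to Belkale's classical case by constructing an explicit $x \in \hat{\mathcal{F}}$ with $x - z \in \R\mathcal{F}_1$; this forces $\hat p_2(x) = \hat p_2(z) = \wh{\op{Ind}}(\alpha, \beta)$, and since $\hat p_2(\hat{\mathcal{F}}) = \hat{\mathcal{F}}_2$ by Theorem \ref{protheo}, the desired conclusion follows. To build $x$: use Proposition \ref{ARYp21B} to select a shadow $\tilde\beta \in \mathsf{LR}_{r-d}^s$ of $\beta$; Theorem \ref{typeII} applied to $(\alpha, \tilde\beta) \in \mathsf{LR}_d^s \times \mathsf{LR}_{r-d}^s$ supplies a lift $x' \in \mathcal{F}$ with $x' - \pi^{-1}(\alpha, \tilde\beta) \in \R\mathcal{F}_1$ and $\hat p_2(x') = \op{Ind}(\alpha, \tilde\beta) \in \mathcal{F}_2$. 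Set $x := x' + \pi^{-1}(0, \beta - \tilde\beta)$, so that $x - z = x' - \pi^{-1}(\alpha, \tilde\beta) \in \R\mathcal{F}_1$ as required. Verifying $x \in \hat{\mathcal{F}}$ then comes down to checking each defining condition: the Horn equality is inherited from $x' \in \mathcal{F}$ together with the Horn-balanced correction $\pi^{-1}(0,\beta-\tilde\beta)$; (ii') follows by combining $\sum_j |\beta^j| \ge |\beta^s|$ with $\sum_j |\alpha^j| = |\alpha^s|$; and (i), (iii), (iv) reduce to their classical counterparts for $x' \in \mathcal{F} \subseteq \mathsf{LR}_r^s$, with (iv) handled by Proposition \ref{ARYp21B}(A) exactly as in the proof of Proposition \ref{TBC}, to upgrade a classical nesting into an equivariant one.

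The main obstacle I anticipate is verifying (i) for $x$ at the $j_0$-th partition, where $j_0$ is the shadowing index. The correction $\pi^{-1}(0,\beta-\tilde\beta)$ is coordinatewise nonnegative but is supported only on the $\bar I_{j_0}$ positions and need not be monotonically decreasing there, so adding it to $x'$ could in principle break the decreasing order in the $j_0$-th partition of $x$. Resolving this cleanly will likely require either iterating the construction over several choices of shrinking index $j_0 \in [s-1]$ and using Lemma \ref{ortho} to isolate the effects partition by partition, or choosing $\tilde\beta$ with additional care so that $x'+\pi^{-1}(0,\beta-\tilde\beta)$ remains weakly decreasing. Either way, Proposition \ref{ARYp21B} is the essential equivariant input beyond Belkale's classical machinery.
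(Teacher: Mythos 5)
Your surjectivity argument is exactly the paper's: Proposition \ref{TBC} provides $\pi$ as a section of $\wh{\op{Ind}}$. For well-definedness, your setup is on the right track and you have correctly identified the genuine difficulty, but you stop short of resolving it, so the proof as written has a gap.

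Concretely: you set $x := x' + \pi^{-1}(0, \beta - \tilde\beta)$ where $x' = \op{Ind}(\alpha,\tilde\beta) \in \mathcal{F}_2$, and you observe (correctly) that $x - z \in \R\mathcal{F}_1$, so it would suffice to show $x \in \hat{\mathcal{F}}$. The obstruction is exactly the one you flag: the support of $\pi^{-1}(0,\beta-\tilde\beta)$ lies in the $\bar I_{j_0}$ slots of the $j_0$-th partition (for the shrinking index $j_0$), and adding it to the partition $(x')^{(j_0)}$ may destroy the weakly decreasing property. You propose two possible fixes (iterating over $j_0$, or choosing $\tilde\beta$ more carefully) but leave both unexecuted, which means the central claim---that $\wh{\op{Ind}}(\alpha,\beta)$ lands in $\mathsf{EqLR}_r^s$---is not actually established. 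Choosing $\tilde\beta$ ``with additional care'' is not obviously available: Proposition \ref{ARYp21B} gives no control over which rows of $\lambda^{j,\downarrow}$ are decreased beyond the total $|\lambda^{j,\downarrow}|$.

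What the paper does instead bypasses the need to produce a single auxiliary point $x \in \hat{\mathcal{F}}$. It directly decomposes $\pi^{-1}(\alpha,\beta) = \sum c_{j,a}\,r(j,a) + z$ and, for each $\ell\in[s-1]$, $\pi^{-1}(\alpha,\beta_\ell) = \sum d^\ell_{j,a}\,r(j,a) + z_\ell$ where $\beta_\ell$ is a shadow in the $\ell$-th slot; it then proves the coefficient inequality $d^\ell_{j,a} \ge c_{j,a}$ for every $(j,a)$, by identifying the coefficients with differences of the form $\mu_p - \alpha_q$. The key point your approach misses is the flexibility of choosing $\ell$ per component: for each $k$, picking any $\ell\ne k$ yields $\pi^{-1}(\alpha,\beta)^{(k)} = \pi^{-1}(\alpha,\beta_\ell)^{(k)}$, hence
\begin{equation*}
z^{(k)} \;=\; z_\ell^{(k)} + \sum_{(j,a)}\bigl(d^\ell_{j,a}-c_{j,a}\bigr)\,r(j,a)^{(k)},
\end{equation*}
a nonnegative combination of partitions, which immediately gives (i) for the $k$-th slot without ever forming the problematic sum in a single fixed slot. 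Conditions (iii), (iv) then follow by the same device (choosing $\ell$ away from the slot under scrutiny, or exploiting the monotonicity $\pi^{-1}(\alpha,\beta_\ell)^{(\ell)} \subseteq \pi^{-1}(\alpha,\beta)^{(\ell)}$), and (ii') is a direct computation. You would need to supply this componentwise argument---or an equally effective substitute---to close the gap; as it stands, the proposal outlines the right strategy (iterate over shrinking indices) but does not carry it out.
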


In order to save some on notation in what follows, we will use $v^{(k)}$ to denote the $k^\text{th}$ entry (itself a vector of length $r$) of $v$; i.e., $v = (v^{(1)},v^{(2)},\hdots,v^{(s)})\in \R^{rs}$.

\begin{proof}
Linearity is obvious and surjectivity follows since Proposition \ref{TBC} implies $\wh{\op{Ind}}$ has a section given by $\pi$.

It remains to show that the image of $\wh{\op{Ind}}$ is contained in $\hat{\mathcal{F}}_2$. 
Let $x=(\mu^1,\hdots,\mu^{s-1},\kappa)\in \mathsf{LR}_d^s$ and $y=(\alpha^1,\hdots,\alpha^{s-1},\gamma)\in \mathsf{EqLR}_{r-d}^s$. Write $\pi^{-1}(x,y)$ as $(\lambda^1,\hdots,\lambda^{s-1},\nu)$, but remember that $\lambda^1,\hdots,\lambda^{s-1},\nu$ may not be partitions. 

For each $\ell\in [s-1]$, find a $y_\ell = (\beta^1,\hdots,\beta^{s-1},\gamma)$ with $\beta^\ell = \alpha^{\ell,\downarrow}$, $\beta^k = \alpha^k$ for $k\ne \ell$ as in the conclusion of Proposition \ref{ARYp21B}. 
Let $\mathfrak{J}$ be the set of all the type I pairs $(j,a)$. 

Write 
$$
\pi^{-1} (x,y) = \sum_{\mathfrak{J}} c_{j,a} r(j,a) + z
$$
where $c_{j,a}\in \R$ and $z=\wh{\op{Ind}}(x,y) \in \R\hat{\mathcal{F}}_2$. Likewise, express 
$$
\pi^{-1}(x,y_\ell) = \sum_{\mathfrak{J}} d^\ell_{j,a} r(j,a)+z_\ell.
$$

We claim that for every $(j,a)\in \mathfrak{J}$, $d^\ell_{j,a}\ge c_{j,a}$. 

First examine the case $j<s$. Then $a\in I_j$ and $a+1\in \bar I_j$. Therefore $\lambda^j_a = \mu_p$ for some $p$ and $\lambda^j_{a+1} = \alpha_q$ for some $q$, and (by Lemma \ref{ortho} and the definition of $\hat{\mathcal{F}}_2$) $c_{j,a} = \lambda^j_a-\lambda^j_{a+1} = \mu_p - \alpha_q$. Likewise, $d^\ell_{j,a} = \mu_p-\beta_q$. Since $\beta_q\le \alpha_q$, we get $d^\ell_{j,a}\ge c_{j,a}$. 

Second, for $j=s$, we have $a\in K$ and $a-1\in \bar K$. So $\nu_{a-1} = \gamma_q$ for some $q$ and $\nu_a = \kappa_p$ for some $p$, and $c_{j,a} = \gamma_q-\kappa_p$. But since the last coordinates of $y$ and $y_\ell$ agree, we also have $d^\ell_{j,a} = \gamma_q-\kappa_p$, hence $c_{j,a} = d^\ell_{j,a}$. 

Now we verify that $z\in \mathsf{EqLR}_r^s$ by verifying the inequalities (i) (plus nonnegativity), (ii'), (iii), and (iv). 
\begin{enumerate}
\item[(i)] 
If $k\ne \ell$, then $\pi^{-1}(x,y_\ell)^{(k)} = \pi^{-1}(x,y)^{(k)}$. Therefore 
\begin{align}\label{critical}
z^{(k)} = z_\ell^{(k)} + \sum_{\mathfrak{J}}(d^\ell_{j,a}-c_{j,a}) r(j,a)^{(k)}.
\end{align}
It follows at once that each $z^{(k)}$ is a partition (recall that the above claim holds for arbitrary $\ell$). 
\item[(ii')] We have $\sum_{j=1}^{s-1} |z^{(j)}| - |z^{(s)}| = \sum_{j=1}^{s-1} |\mu^j| - |\kappa| + \sum_{j=1}^{s-1} |\alpha^j| - |\gamma| =  \sum_{j=1}^{s-1} |\alpha^j| - |\gamma| \ge 0$. 
\item[(iii)] Fix any $\ell\in [s-1]$. Even though $\pi^{-1}(x,y)^{(\ell)}$ and $\pi^{-1}(x,y_\ell)^{(\ell)}$ are not likely to be partitions, they do still satisfy the entrywise bound
$$
\pi^{-1}(x,y_\ell)^{(\ell)} \subseteq \pi^{-1}(x,y)^{(\ell)}.
$$
Therefore $z^{(\ell)}\supseteq z_\ell^{(\ell)} + \sum_{\mathfrak{J}} (d_{j,a}^\ell-c_{j,a}) r(j,a)^{(\ell)}$. Since $z_\ell=\op{Ind}(x,y_\ell)$ satisfies all the Horn inequalities, as of course do the $r(j,a)$, and since every $d_{j,a}^\ell-c_{j,a}\ge 0$, $z$ must satisfy the Horn inequalities as well. 
\item[(iv)] If $k_0\ne \ell$, then since $z_\ell^{(k_0)}\subseteq z_\ell^{(s)}$ and each $r(j,a)^{(k_0)}\subseteq r(j,a)^{(s)}$, we get from (\ref{critical}) applied to both $k=k_0$ and $k=s$ that $z^{(k_0)}\subseteq z^{(s)}$. 
\end{enumerate}

We have just shown that $z = \wh{\op{Ind}}(x,y)$ belongs to $\mathsf{EqLR}_r^s$. Since $z\in \R\hat{\mathcal{F}}_2$, we get that $z\in \hat{\mathcal{F}}_2$ as desired. 
\end{proof}

It is helpful in practice to have a formula for $\wh{\op{Ind}}$, or really for $\wh p_2$, so we record that here for use in \S \ref{x}. 
\begin{lemma}
Once again let $\mathfrak{J} = \{(j,a)\}$ be the collection of type I data on the facet $\hat{\mathcal{F}}$, with associated rays $r(j,a)$. 
The map $\wh p_2:\R\hat{\mathcal{F}} \to \R\hat{\mathcal{F}}_2$ sends $(\lambda^1,\hdots,\lambda^{s-1},\nu)$ to 
\begin{align}\label{tIIform}
 (\lambda^1,\hdots,\lambda^{s-1},\nu) 
- \sum_{\tiny \begin{array}{c}(j,a)\in \mathfrak{J} \\ j<s\end{array}} (\lambda^j_a-\lambda^j_{a+1})r(j,a) 
- \sum_{\tiny \begin{array}{c}(j,a)\in \mathfrak{J} \\ j=s\end{array}} (\nu_{a-1}-\nu_{a})r(j,a). 
\end{align}
\end{lemma}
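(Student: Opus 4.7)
The plan is to recognize $\wh p_2$ as the linear projection onto $\R\hat{\mathcal{F}}_2$ in the direct sum decomposition $\R\hat{\mathcal{F}} = \R\mathcal{F}_1 \oplus \R\hat{\mathcal{F}}_2$ guaranteed by Theorem \ref{protheo}. Because $\mathcal{F}_1$ is the simplicial cone spanned by the linearly independent rays $\{r(j,a) : (j,a)\in \mathfrak{J}\}$, it suffices to identify, for each type I datum $(j,a)$, a linear functional dual to $r(j,a)$ that vanishes on $\R\hat{\mathcal{F}}_2$.

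For each $(j,a)\in \mathfrak{J}$, define the linear functional $f_{j,a}:\R^{rs}\to \R$ by
$$
f_{j,a}(\lambda^1,\hdots,\lambda^{s-1},\nu) = \begin{cases} \lambda^j_a-\lambda^j_{a+1} & \text{if } j<s, \\ \nu_{a-1}-\nu_a & \text{if } j=s.\end{cases}
$$
Two observations pin down $\wh p_2$ immediately. First, by Lemma \ref{ortho} (together with its stated extension to the cases where one or both of the data involve $j=s$), we have $f_{j,a}(r(j',a')) = \delta_{(j,a),(j',a')}$; so $\{f_{j,a}\}_{(j,a)\in \mathfrak{J}}$ is a dual basis to $\{r(j,a)\}_{(j,a)\in \mathfrak{J}}$ on $\R\mathcal{F}_1$. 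Second, every $f_{j,a}$ vanishes identically on $\R\hat{\mathcal{F}}_2$, by the very definition of $\hat{\mathcal{F}}_2$.

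Given any $v\in \R\hat{\mathcal{F}}$, decompose it as $v = v_1 + v_2$ with $v_1 = \sum_{(j,a)\in \mathfrak{J}} c_{j,a}\, r(j,a) \in \R\mathcal{F}_1$ and $v_2 \in \R\hat{\mathcal{F}}_2$. Applying $f_{j,a}$ and using the two observations above forces $c_{j,a} = f_{j,a}(v)$. Therefore
$$
\wh p_2(v) = v - \sum_{(j,a)\in \mathfrak{J}} f_{j,a}(v)\, r(j,a),
$$
and splitting the sum according to whether $j<s$ or $j=s$ recovers exactly formula (\ref{tIIform}).

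I do not expect a serious obstacle: the substantive content — the direct sum decomposition and the duality between the functionals $f_{j,a}$ and the rays $r(j,a)$ — has already been established in Theorem \ref{protheo} and Lemma \ref{ortho}. The only point requiring care is verifying the delta property in the mixed case (one datum with $j<s$, the other with $j=s$), which is precisely the content of the analogous statements promised in Lemma \ref{ortho}.
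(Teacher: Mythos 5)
Your proof is correct and takes the natural (and surely intended) route: the paper states this lemma without proof, treating it as an immediate consequence of Theorem~\ref{protheo} (the direct sum decomposition $\R\hat{\mathcal{F}} = \R\mathcal{F}_1 \oplus \R\hat{\mathcal{F}}_2$) and Lemma~\ref{ortho} (the $\delta_{(j,a),(j',a')}$ pairing between the functionals $f_{j,a}$ and the rays $r(j',a')$), together with the fact that each $f_{j,a}$ vanishes on $\hat{\mathcal{F}}_2$ by definition. Your argument supplies exactly this reasoning, so it matches the paper's approach.
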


It is possible, in general, for $\wh{\op{Ind}}$ to take extremal rays to non-extremal rays, or even to $0$. In fact, by \cite{B2}*{Proposition 9.3}, we have a good understanding of the kernel of $\wh{\op{Ind}}$. 

\begin{corollary}\label{noway}
The kernel of $\wh{\op{Ind}}$ is spanned by the elements $\pi(r(j,a))$ as $(j,a)$ ranges over $\mathfrak{J}$. Moreover, the number of extremal rays of $\mathsf{LR}_d^s\times \mathsf{EqLR}_{r-d}^s$ which map to $0$ under $\wh{\op{Ind}}$ is equal to $|\mathfrak{J}|$, and these rays therefore also form a basis of $\ker\wh{\op{Ind}}$.  
\end{corollary}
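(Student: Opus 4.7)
The plan is to reduce the corollary to \cite{B2}*{Proposition 9.3} in the classical setting, leveraging the fact that $\mathsf{LR}_{r-d}^s$ sits as a face of $\mathsf{EqLR}_{r-d}^s$, cut out by turning the defining inequality (ii'), $\sum_k |\alpha^k| \geq |\gamma|$, into an equality. For the first claim, I would compute the kernel directly from the decomposition $\wh{\op{Ind}} = \hat p_2 \circ \pi^{-1}$: since $\pi$ is a linear bijection and $\ker \hat p_2 = \R\mathcal{F}_1 = \op{span}\{r(j,a) : (j,a) \in \mathfrak{J}\}$, the linear kernel of $\wh{\op{Ind}}$ is $\op{span}\{\pi(r(j,a))\}$. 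Each $r(j,a)$ lies in $\mathsf{LR}_r^s$ by Theorem~\ref{typeI}, so Proposition~\ref{facto} places every $\pi(r(j,a))$ inside $\mathsf{LR}_d^s \times \mathsf{LR}_{r-d}^s \subset \mathsf{LR}_d^s \times \mathsf{EqLR}_{r-d}^s$, so the kernel is genuinely realized in the domain. Linear independence of the $r(j,a)$ (again Theorem~\ref{typeI}) gives $\dim \ker \wh{\op{Ind}} = |\mathfrak{J}|$.

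For the extremal ray count, I would first import Belkale's result: because $\mathsf{LR}_d^s \times \mathsf{LR}_{r-d}^s$ is a face of $\mathsf{LR}_d^s \times \mathsf{EqLR}_{r-d}^s$, its extremal rays remain extremal in the larger cone; and since $\wh{\op{Ind}}|_{\mathsf{LR}_d^s \times \mathsf{LR}_{r-d}^s} = \iota \circ \op{Ind}$ with $\iota$ injective, the $|\mathfrak{J}|$ distinguished extremal rays furnished by \cite{B2}*{Proposition 9.3} are all mapped to $0$ by $\wh{\op{Ind}}$. Conversely, I would show that no new such extremal ray appears in the equivariant setting: if $\rho = (x,y)$ is an extremal ray of $\mathsf{LR}_d^s \times \mathsf{EqLR}_{r-d}^s$ with $\wh{\op{Ind}}(\rho) = 0$, then by the first part $\rho \in \op{span}\{\pi(r(j,a))\} \subset \R\mathsf{LR}_d^s \times \R\mathsf{LR}_{r-d}^s$, so $y \in \R\mathsf{LR}_{r-d}^s \cap \mathsf{EqLR}_{r-d}^s = \mathsf{LR}_{r-d}^s$ and $\rho$ lies on the classical face. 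There $\rho$ remains extremal, and Belkale's result identifies it as one of the $|\mathfrak{J}|$ already-accounted-for rays. The final ``basis'' assertion is then immediate: $|\mathfrak{J}|$ linearly independent vectors in an $|\mathfrak{J}|$-dimensional kernel.

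The main obstacle to overcome is the possibility of a genuinely equivariant extremal ray (one of the form $0 \times b$ with $b$ extremal in $\mathsf{EqLR}_{r-d}^s$ but not in $\mathsf{LR}_{r-d}^s$) landing in $\ker \wh{\op{Ind}}$. This is averted precisely by the first part: the kernel is confined to a linear subspace that meets the equivariant cone only along the classical face, so no new rays appear. Apart from this, the argument is essentially book-keeping around the face inclusion $\mathsf{LR}_d^s \times \mathsf{LR}_{r-d}^s \hookrightarrow \mathsf{LR}_d^s \times \mathsf{EqLR}_{r-d}^s$.
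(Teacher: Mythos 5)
Your first paragraph (computing $\ker\wh{\op{Ind}}$ via $\wh{\op{Ind}} = \hat p_2\circ\pi^{-1}$, $\ker\hat p_2 = \R\mathcal{F}_1$, and the invertibility of $\pi$) is correct and matches the paper in substance, and your observation that the kernel is contained in $\R\mathsf{LR}_d^s\times\R\mathsf{LR}_{r-d}^s$ — so any extremal ray of $\mathsf{LR}_d^s\times\mathsf{EqLR}_{r-d}^s$ mapping to $0$ must in fact lie on the classical face $\mathsf{LR}_d^s\times\mathsf{LR}_{r-d}^s$ — is the same reduction the paper performs (the paper phrases it as ``$\wh{\op{Ind}}$ preserves the extent to which inequality (ii') is strict,'' but it comes to the same thing).

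The gap is in the count. You assert that \cite{B2}*{Proposition 9.3} furnishes $|\mathfrak{J}|$ extremal rays of $\mathsf{LR}_d^s\times\mathsf{LR}_{r-d}^s$ mapping to $0$; it does not. Belkale's Proposition 9.3(3) and Corollary 9.4 are stated for the smaller cone $\mathsf{C}_{SL_d}^s\times\mathsf{C}_{SL_{r-d}}^s$ (equivalently for Belkale's original $\op{Ind}$, whose domain is a proper subspace — see the Remark following Theorem~\ref{typeII}), and they give exactly $|\mathfrak{J}|-(s-1)$ such rays. The remaining $s-1$ rays of $\mathsf{LR}_d^s\times\mathsf{LR}_{r-d}^s$ in the kernel are the $x_j\times 0$ for $j\in[s-1]$; that these map to $0$ is not contained in \cite{B2} and requires a separate input (the paper cites \cite{BKiers}*{Corollary 60}). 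One must also rule out the rays $0\times x_j$ landing in the kernel — otherwise the total would exceed $|\mathfrak{J}|$ — which the paper does by the dimension count from the first part. Your argument as written skips both of these: neither the $x_j\times 0$ contribution nor the exclusion of $0\times x_j$ is addressed, and without them the equality $\#\{\text{rays mapping to }0\}=|\mathfrak{J}|$ is not established. (Relatedly, your closing sentence takes linear independence of these $|\mathfrak{J}|$ rays as given; that too is part of what the combination of Belkale's Corollary 9.4 and the $x_j\times 0$ analysis supplies.)
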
 

\begin{proof}
Clearly each $\pi(r(j,a))$ is in the kernel. Since $\pi$ is an invertible map, the collection $\{\pi(r(j,a))| (j,a)\in \mathfrak{J}\}$ is a linearly independent set. If it has the cardinality of $\dim \ker \wh{\op{Ind}}$ we will have shown that they are a basis. For this we observe that 
\begin{align*}
\dim \ker \wh{\op{Ind}} &= \dim(\R \mathsf{LR}_r^s\times \R\mathsf{EqLR}_r^s) - \dim \R\hat{\mathcal{F}}_2\\
&= \dim \R \hat{\mathcal{F}} - \dim \R\hat{\mathcal{F}}_2\\
&= \dim \R \mathcal{F}_1 = |\mathfrak{J}|.
\end{align*}
Now, the only rays of $\mathsf{LR}_d^s\times \mathsf{EqLR}_{r-d}^s$ which map to $0$ are in fact rays of $\mathsf{LR}_d^s\times \mathsf{LR}_{r-d}^s$, since $\wh{\op{Ind}}$ preserves the extent to which inequality (ii') is strict.

From \cite{B2}*{Proposition 9.3(3), Corollary 9.4}, there are exactly $|\mathfrak{J}|-(s-1)$ rays of $\mathsf{C}_{SL_d}^s\times \mathsf{C}_{SL_{r-d}}^s$ which map to $0$. 
Moreover $x_j\times 0$ maps to $0$ for every $j\in [s-1]$, as follows from \cite{BKiers}*{Corollary 60}. 
For dimension reasons, the elements $0\times x_j$ cannot map to $0$; otherwise $\ker \wh{\op{Ind}}$ would have dimension greater than $|\mathfrak{J}|$. 
\end{proof}

\section{Special rays}
\subsection{Rays of $\mathsf{EqLR}_r^s$ not on any Horn facet}

In this section, we find the exceptional rays that are not on any Horn facet. We begin with a couple of lemmas on certain rays. 

\begin{lemma}
Suppose $x=(\omega_{k_1},\omega_{k_2},\hdots,\omega_{k_{s-1}},\omega_\ell)$ belongs to $\mathsf{EqLR}_r^s$. Then $\R_{\ge0}x$ is an extremal ray. 
\end{lemma}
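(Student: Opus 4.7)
The plan is to show that any decomposition $x = y + z$ with $y, z \in \mathsf{EqLR}_r^s$ forces both $y$ and $z$ to be non-negative scalar multiples of $x$, which is the definition of $\R_{\ge 0} x$ being extremal. Since a ray requires $x \ne 0$, I first observe that $\ell \ge 1$ (otherwise condition (iv) for $x$ would give $\omega_{k_j} \subseteq 0$, hence $k_j = 0$ for all $j$, making $x = 0$) and consequently $k_j \le \ell$ for every $j$.

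Write $y = (\mu^1, \hdots, \mu^{s-1}, \nu_y)$ and $z = (\pi^1, \hdots, \pi^{s-1}, \nu_z)$. The crux is to exploit the rigid $0/1$ structure of $x$. Non-negativity forces $\mu^j_i = \pi^j_i = 0$ at every position $i$ where $(\omega_{k_j})_i = 0$, i.e., for $i > k_j$; at the remaining positions $i \le k_j$, the non-negative entries $\mu^j_i, \pi^j_i$ sum to $1$. Because $\mu^j$ and $\pi^j = \omega_{k_j} - \mu^j$ must both be weakly decreasing on $\{1, \hdots, k_j\}$, the sequence $\mu^j$ is in fact constant on that block: $\mu^j = t_j \omega_{k_j}$ for some $t_j \in [0,1]$, and correspondingly $\pi^j = (1 - t_j) \omega_{k_j}$. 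The same reasoning applied to the final component yields $\nu_y = t_s \omega_\ell$ and $\nu_z = (1 - t_s) \omega_\ell$.

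To finish, I apply condition (iv) to both $y$ and $z$. For any $j$ with $k_j \ge 1$, the containment $t_j \omega_{k_j} \subseteq t_s \omega_\ell$ evaluated at position $1$ (where both indicators equal $1$, since $\ell \ge 1$) gives $t_j \le t_s$. The corresponding containment for $z$, namely $(1 - t_j) \omega_{k_j} \subseteq (1 - t_s) \omega_\ell$, forces $1 - t_j \le 1 - t_s$, i.e., $t_j \ge t_s$. Hence $t_j = t_s$; and when $k_j = 0$ both $\mu^j$ and $\pi^j$ vanish, so the value of $t_j$ is irrelevant and we may set $t_j := t_s$. Therefore $y = t_s x$ and $z = (1 - t_s) x$ as desired.

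The main obstacle, insofar as there is one, is recognizing that condition (iv) is essential: the weakly-decreasing and non-negativity constraints alone pin each block to an independent line through $\omega_{k_j}$, but it is only the containment in $\nu$ (applied separately to $y$ and to $z$) that ties these parameters together into a single scalar. Everything else is immediate from the $0/1$ nature of $x$.
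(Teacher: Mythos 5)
Your proof is correct and takes essentially the same approach as the paper: observe that any decomposition of $x$ must split each block into scalar multiples $t_j\omega_{k_j}$ and $(1-t_j)\omega_{k_j}$, then use condition (iv) on both summands to force all the scalars to coincide. You supply more explicit justification than the paper for why the blocks must be scalar multiples (the paper simply asserts this), and you also handle the degenerate cases $\ell=0$ and $k_j=0$ carefully, but the core argument is the same.
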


\begin{proof}
By hypothesis, the inequalities (iv) imply that each $k_i\le \ell$. 
The only way to write $x$ as a sum of partitions is 
$$
(\omega_{k_1},\hdots,\omega_{k_{s-1}},\omega_\ell) = (q_1\omega_{k_1},\hdots,q_{s-1}\omega_{k_{s-1}},q_s\omega_\ell)+(r_1\omega_{k_1},\hdots,r_{s-1}\omega_{k_{s-1}},r_s\omega_\ell),
$$
where $q_i+r_i=1$ for each $i$. If both elements on the RHS belong to $\mathsf{EqLR}_r^s$, then we have both $q_s\ge q_i$ and $r_s\ge r_i$ for each $i$ by (iv), which forces equalities to hold since the two sides each add to $1$. So the summands are parallel. 
\end{proof}

\begin{lemma}\label{dominants}
The triple $(\omega_{k_1},\hdots,\omega_{k_{s-1}},\omega_\ell)\in \mathsf{EqLR}_r^s$ if and only if the inequalities 
\begin{align*}
~\forall i\in [s-1],~~k_i&\le \ell \text{ and }\\
\sum_{i=1}^{s-1} k_i&\ge \ell
\end{align*}
are satisfied.
\end{lemma}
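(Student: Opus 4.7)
The plan is to prove each direction separately, relying on Theorem~\ref{elr} for the forward (``only if'') direction and on an explicit Hermitian matrix construction for the backward (``if'') direction.

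For ``only if'', suppose $(\omega_{k_1},\hdots,\omega_{k_{s-1}},\omega_\ell)\in \mathsf{EqLR}_r^s$. Condition (iv) of Theorem~\ref{elr} reads $\omega_{k_i}\subseteq \omega_\ell$ for each $i\in [s-1]$; comparing entries slot-by-slot, this is exactly $k_i\le \ell$. Condition (ii') reads $\sum_i |\omega_{k_i}| \ge |\omega_\ell|$, i.e., $\sum_i k_i\ge \ell$. Both asserted inequalities drop out immediately.

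For ``if'', assume $k_i\le \ell$ for all $i$ and $\sum_i k_i\ge \ell$. Rather than check the entire family of Horn inequalities (iii) combinatorially, my plan is to exhibit explicit Hermitian matrices certifying membership in $\mathsf{EqC}_r^s$. Fix an orthonormal basis $e_1,\hdots,e_r$ of $\C^r$, set $W=\op{span}(e_1,\hdots,e_\ell)$, and let $C=P_W$, whose eigenvalues are $\omega_\ell$. For each $i\in [s-1]$, choose a $k_i$-element subset $S_i\subseteq [\ell]$ so that $\bigcup_{i=1}^{s-1} S_i=[\ell]$, and set $V_i=\op{span}\{e_j:j\in S_i\}$, $A_i=P_{V_i}$; then each $A_i$ has eigenvalues $\omega_{k_i}$. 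In the basis $e_1,\hdots,e_r$, the operator $\sum_i A_i-C$ is diagonal, with $j$-th entry $|\{i:j\in S_i\}|-1\ge 0$ for $j\le \ell$ (since every such $j$ is covered) and $0$ for $j>\ell$. Hence $\sum_i A_i\ge C$, giving $(\omega_{k_1},\hdots,\omega_{k_{s-1}},\omega_\ell)\in \mathsf{EqC}_r^s$; the extra conditions $\omega_{k_i}\subseteq \omega_\ell$ and $(\omega_{k_i})_r\ge 0$ needed to land in $\mathsf{EqLR}_r^s$ are built into the hypotheses.

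I expect the only substantive step to be the existence of the covering $\{S_i\}$. This is handled by a straightforward greedy argument: at step $i$, fill $S_i$ first with as many previously uncovered positions of $[\ell]$ as $k_i$ allows, then top it up with already-covered positions up to size $k_i$. Padding is possible because $k_i\le \ell$, and $\bigcup_i S_i=[\ell]$ at the end because the running number of covered positions satisfies $|U_i|\ge \min(\ell,k_1+\cdots+k_i)$ and $\sum_i k_i\ge \ell$.
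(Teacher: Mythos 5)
Your proof is correct, and the backward direction takes a genuinely different and more elementary route than the paper's. The paper's backward argument stays entirely inside the combinatorial/Schubert-calculus framework: it observes that $(\omega_\ell,0,\hdots,0,\omega_\ell)\in \mathsf{EqLR}_r^s$ because $c_{\omega_\ell,0,\hdots,0}^{\omega_\ell}=1$, then invokes Anderson--Richmond--Yong's Proposition~2.1(A) (monotonicity: enlarging one $\lambda^j$ while keeping it inside $\nu$ preserves membership) to reach $(\omega_\ell,\omega_{k_2},\hdots,\omega_{k_{s-1}},\omega_\ell)$, and finally Proposition~2.1(B) (the shrinking principle behind Proposition~\ref{ARYp21B}, in its continuous form) to shrink the first entry from $\omega_\ell$ down to $\omega_{k_1}$, using $\ell-\sum_{i\ge2}k_i\le k_1\le\ell$ to guarantee a partition of size $k_1$ inside $\omega_\ell$ --- necessarily $\omega_{k_1}$. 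Your approach instead exhibits explicit commuting orthogonal projections $A_i=P_{V_i}$, $C=P_W$, reduces $\sum_iA_i-C\ge0$ to a diagonal positivity check, and settles the combinatorics with a greedy covering of $[\ell]$. This buys a self-contained certificate of membership in $\mathsf{EqC}_r^s$ straight from its eigenvalue definition, bypassing both Fulton's Horn description and the ARY machinery; it loses the parallelism with the rest of the paper, which uses Proposition~\ref{ARYp21B} repeatedly, but as a standalone proof it is cleaner. Both proofs handle the forward direction identically via (iv) and (ii'), and your covering argument is airtight: $|U_i|=\min(\ell,k_1+\cdots+k_i)$ by induction, with padding always available since $k_i\le\ell$, so $\sum_ik_i\ge\ell$ gives $U_{s-1}=[\ell]$.
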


\begin{proof}
($\Rightarrow$) These are just inequalities (iv) and (ii'). 

($\Leftarrow$) Observe that $(\omega_\ell,0,\hdots,0,\omega_\ell)\in \mathsf{EqLR}_r^s$ since the corresponding LR coefficient is $1$. By \cite{ARY}*{Proposition 2.1(A)}, we also have $(\omega_\ell,\omega_{k_2},\omega_{k_3},\hdots,\omega_{k_{s-1}},\omega_\ell)\in \mathsf{EqLR}_r^s$. 

By \cite{ARY}*{Proposition 2.1(B)}, for any value $t$ between $\ell-\sum_{i\ge 2} k_i=|\omega_\ell|-\sum_{i\ge 2}|\omega_{k_i}|$ and $\ell=|\omega_\ell|$ we may find $\lambda$ such that $(\lambda,\omega_{k_2},\hdots,\omega_\ell)\in \mathsf{EqLR}_r^s$, $\lambda\subset \omega_\ell$, and $|\lambda|=t$. By assumption, $\ell-\sum_{i\ge 2} k_i\le k_1\le \ell$, so we may find such a $\lambda$ with $|\lambda|=k_1$. Of course, the only partition of $k_1$ fitting inside $\omega_\ell$ is $\omega_{k_1}$, so $(\omega_{k_1},\hdots,\omega_{k_{s-1}},\omega_\ell)\in \mathsf{EqLR}_r^s$ as promised. 
\end{proof}

\begin{theorem}\label{extras}~

\begin{enumerate}
\item[(I)] Suppose $(\lambda^1,\hdots,\lambda^{s-1}, \nu)$ gives an extremal ray of $\mathsf{EqLR}_r^s$ satisfying 
\begin{enumerate}
\item[(A)] $\sum_{j=1}^{s-1}|\lambda^j|>|\nu|$;
\item[(B)] each inequality of (iii) holds strictly. 
\end{enumerate}
Then $\lambda^1=\hdots = \lambda^{s-1}=\nu$ and this common partition is $\omega_\ell$ for some $\ell$. 
\item[(II)] Furthermore, every element of the form $(\omega_\ell,\omega_\ell,\hdots,\omega_\ell)$ is an extremal ray of $\mathsf{EqLR}_r^s$, and such a ray lies on no Horn facet if and only if $\ell\ge r/(s-1)$. 
\end{enumerate}
\end{theorem}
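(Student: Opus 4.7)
Proof plan.

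For Part (II), membership $(\omega_\ell,\dots,\omega_\ell)\in \mathsf{EqLR}_r^s$ is immediate from Lemma \ref{dominants} (the inequalities $k_i=\ell\le\ell$ and $\sum k_i=(s-1)\ell\ge\ell$ both hold trivially), and extremality is the preceding unnamed lemma. For the Horn facet criterion, note that at $R=(\omega_\ell,\dots,\omega_\ell)$ any Horn inequality (iii) with datum $(I_1,\dots,I_{s-1},K)$ simplifies to $\sum_{j=1}^{s-1}|I_j\cap[\ell]|\ge|K\cap[\ell]|$. For the forward direction ($\ell<r/(s-1)$), I would exhibit a tight $d=1$ datum: take $i_j=\ell+1$ for each $j$ and $k=\sum_j i_j-(s-2)=(s-1)\ell+1$; the hypothesis (equivalent to $(s-1)\ell+1\le r$ for integer $\ell$) ensures $k\in[r]$, the Pieri product in $\mathbb{P}^{n-1}$ gives $c=1$, and both sides of the resulting inequality at $R$ vanish.

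For the reverse direction ($\ell\ge r/(s-1)$), I would argue by contradiction: suppose some Horn datum $(I_1,\dots,I_{s-1},K)$ with $c=1$ in $\op{Gr}(d,\mathbb{C}^n)$, $d<r$, yields equality at $R$. Writing $a_j:=|I_j\cap[\ell]|$ and $b:=|K\cap[\ell]|$, this means $\sum_j a_j=b$. Applying Proposition \ref{TBC} at the point $R$ on this hypothetical facet, $\pi(R)=(\omega_{a_1},\dots,\omega_{a_{s-1}},\omega_b)\times(\omega_{\ell-a_1},\dots,\omega_{\ell-a_{s-1}},\omega_{\ell-b})$ lies in $\mathsf{LR}_d^s\times\mathsf{EqLR}_{r-d}^s$. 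The (iv)-constraint in the $\mathsf{EqLR}_{r-d}^s$-factor yields $\ell-a_j\le\ell-b$, i.e., $a_j\ge b$; combined with $\sum a_j=b$ and nonnegativity this forces $a_j=b=0$. Hence $I_j,K\subseteq(\ell,r]$, so $d\le r-\ell\le r(s-2)/(s-1)$. The codimension identity $\sum_j|\tau(I_j)|=|\tau(K)|$, together with the fact that all of $I_j,K$ lie in a set of size $r-\ell$, should rule out this configuration for $s\ge 3$; I expect the detailed counting here to be the main obstacle and may require treating low-$d$ (especially $d=1$) separately via the direct calculation above.

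For Part (I), the key observation is that (A) and (B) put $R$ strictly off the (ii') facet and strictly off every Horn facet, so the only facets of $\mathsf{EqLR}_r^s$ containing $R$ come from (i), (c), and (iv). Consequently, near $R$ the cone $\mathsf{EqLR}_r^s$ agrees with the simpler cone $\widetilde{C}:=\{(\lambda^1,\dots,\lambda^{s-1},\nu):\text{each is a partition and }\lambda^j\subseteq\nu\}$, and $R$ must be an extremal ray of $\widetilde{C}$ as well. A direct decomposition argument parallel to the proof of the unnamed lemma (exploiting that the only way to write $\omega_k$ as a sum of partitions is via scalar multiples, so any $R=R_1+R_2$ in $\widetilde{C}$ forces each coordinate of $R_1$ to be proportional to the corresponding $\omega_{k_i}$ or $\omega_\ell$, and then the (iv)-inequalities in both $R_1$ and $R-R_1$ force all the proportionality constants to agree) shows the extremal rays of $\widetilde{C}$ are exactly those of the form $(\omega_{k_1},\dots,\omega_{k_{s-1}},\omega_\ell)$ with $0\le k_j\le\ell$. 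Hence $\nu=\omega_\ell$ and each $\lambda^j=\omega_{k_j}$.

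To upgrade this to the stronger conclusion $\lambda^j=\nu$ (i.e.\ $k_j=\ell$) for every $j$, I would argue that if some $k_{j_0}<\ell$, then a suitable Horn datum is tight at $R$, contradicting (B). The natural candidate is a $d=1$ datum: take $i_j=k_j+1$ for $j\ne j_0$, pick $i_{j_0}$ so that the counting $\sum_j[i_j\le k_j]=[k\le\ell]$ holds with $k=\sum i_j-(s-2)$. Verifying validity ($k\in[r]$) and simultaneously realizing $c=1$ is the crux; I anticipate that corner cases where this simple construction fails will require a switch to higher $d$ or to a dual Horn datum (using the reduction $\lambda^j\subseteq\nu$ and the structure from Proposition \ref{ARYp21B}), and this bookkeeping is what I expect to be the hardest step of Part (I).
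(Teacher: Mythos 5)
Your Part~(II) is correct and follows the paper's argument closely. For the forward direction ($\ell<r/(s-1)$), the choice $I_j=\{\ell+1\}$, $K=\{(s-1)\ell+1\}$ is a valid $d=1$ Pieri datum; the paper allows the $a_j$'s to vary, but setting them all equal to $\ell$ works exactly as you describe. For the reverse direction, your use of $\pi$ and Proposition~\ref{TBC} to get $a_j\ge b$ and hence $a_j=b=0$ is a clean alternative to the paper's symmetry argument (and even yields the extra conclusion $K\subseteq(\ell,r]$, which the paper does not need). You need not worry about the final counting or low-$d$ cases: once every $I_p\subseteq(\ell,r]$, the codimension identity $\sum_p|\tau(I_p)|=|\tau(K)|$ rewrites as $\sum_p\sum_{a\in I_p}a=\sum_{k\in K}k+(s-2)d(d+1)/2$; bounding the left side below by $(s-1)\bigl(d\ell+d(d+1)/2\bigr)$ and the right side above by $d(r-d)+(s-1)d(d+1)/2$ gives $(s-1)d\ell+d^2\le dr$, which contradicts $r\le(s-1)\ell$ for any $d\ge 1$. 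This is precisely the paper's calculation and requires no separate treatment of small $d$.

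Part~(I), however, has two genuine gaps. The localization step --- that hypotheses (A) and (B) make $\mathsf{EqLR}_r^s$ agree near $R$ with the cone $\widetilde{C}$ of partition-tuples with $\lambda^j\subseteq\nu$, so that $R$ must be extremal in $\widetilde{C}$ --- is correct and a pleasant reframing. But the argument you sketch for the claim that the extremal rays of $\widetilde{C}$ are exactly the tuples $(\omega_{k_1},\dots,\omega_{k_{s-1}},\omega_\ell)$ only proves that such tuples \emph{are} extremal (each coordinate lies on an extremal ray of the partition cone, and the $\subseteq$-constraints pin down the scalars); it does not show the converse, that every extremal ray of $\widetilde{C}$ has this form. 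That direction requires showing, e.g., that if $\nu$ is not a multiple of a single $\omega_\ell$ then one can split $\nu=\nu_1+\nu_2$ nontrivially \emph{and} split every $\lambda^j=\mu^j_1+\mu^j_2$ compatibly with $\mu^j_i\subseteq\nu_i$ --- a nontrivial feasibility claim you have not addressed. The second gap is the ``upgrade'' from $k_j\le\ell$ to $k_j=\ell$: you need that any $(\omega_{k_1},\dots,\omega_{k_{s-1}},\omega_\ell)\in\mathsf{EqLR}_r^s$ with some $k_{j_0}<\ell$ lies on a Horn facet, and as you acknowledge the $d=1$ datum you propose is not verified and may fail in corner cases. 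The paper sidesteps both issues at once: assuming $\nu_j>\lambda^1_j$ for a minimal index $j$, it perturbs \emph{all} coordinates simultaneously by $\pm\epsilon$ times suitable column shapes $\omega_{i_p}$, chosen so that (i) and (iv) are preserved while (A) and (B) give slack for (ii') and (iii), directly contradicting extremality and forcing $\lambda^j=\nu$; a second, shorter perturbation of the common partition then forces it to be a single $\omega_\ell$. Your route to Part~(I) could perhaps be completed, but both the $\widetilde{C}$-ray characterization and the tight-datum construction would need actual proofs, whereas the direct perturbation argument disposes of the statement in one stroke.
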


\begin{proof}[Proof of (I)]
Assume for the sake of contradiction that $\nu_j>\lambda^1_j$ for some $j$, and assume $j$ is as small as possible. Then since $\lambda^1_{j-1} = \nu_{j-1}\ge \nu_j>\lambda^1_j$, we know that $\lambda^1\pm \epsilon(\underbrace{1,\hdots,1}_{j-1},0,\hdots,0)$ is still a partition for $\epsilon>0$ a small enough real number. 

Let $k$ be the greatest index satisfying $k\ge j$ and $\nu_{k}=\nu_j$, and for each $p=2,\hdots,s-1$, let $i_p$ be the smallest index satisfying $i_p\le k$ and $\lambda^p_{i_p+1}=\lambda^p_{k+1}$ (where $\lambda^p_{r+1}=0$ by definition). Thus in particular $\lambda^p_{i_p}>\lambda^p_{i_p+1}$, so both $\nu\pm\epsilon(\underbrace{1,\hdots,1}_{k},0,\hdots,0)$ and $\lambda^p\pm\epsilon(\underbrace{1,\hdots,1}_{i_p},0,\hdots,0)$ are also partitions for small $\epsilon$. 

Set 
$$
z_{\pm\epsilon}:=(\lambda^1,\hdots,\lambda^{s-1},\nu)\pm\epsilon\left((\underbrace{1,\hdots,1}_{j-1},0,\hdots,0),(\underbrace{1,\hdots,1}_{i_2},0,\hdots,0),\hdots, (\underbrace{1,\hdots,1}_{k},0,\hdots,0)\right).
$$
If we show that for small enough $\epsilon$ both $z_{+\epsilon}$ and $z_{-\epsilon}$ belong to $\mathsf{EqLR}_r^s$, we will have a contradiction regarding the extremality of $(\lambda^1,\hdots,\lambda^{s-1},\nu)$. Since $(\lambda^1,\hdots,\lambda^{s-1},\nu)$ satisfies (ii') and (iii) strictly, we are guaranteed that $z_{\pm\epsilon}$ satisfy (ii') and (iii) for small enough $\epsilon$. The preceding paragraph showed that $z_{\pm\epsilon}$ satisfy (i) (and nonnegativity) for small $\epsilon$ as well. So it suffices to show they satisfy (iv) for small enough $\epsilon$: 

\begin{enumerate}
\item[$\bullet$] We first show $z_{\pm\epsilon}^{(1)} \subseteq z_{\pm\epsilon}^{(s)}$.
If $i\le j$, the inequality $\lambda_i\pm\epsilon\le\nu_i\pm\epsilon$ is satisfied. For $k\ge i>j$, we have $\lambda_i<\nu_j=\nu_i$, so $\lambda_i<\nu_i\pm\epsilon$ for small enough $\epsilon$. For $i>k$, the inequality $\lambda_i\le\nu_i$ is unchanged. 

\item[$\bullet$] Now let $p\in \{2,\hdots,s-1\}$; we'll verify that $z_{\pm\epsilon}^{(p)} \subseteq z_{\pm\epsilon}^{(s)}$. 
If $i\le i_2$, the inequality $\lambda^p_i\pm\epsilon\le \nu_i\pm\epsilon$ is satisfied. For $i_2<i\le k$, we have 
$\lambda^p_i=\lambda^p_{k+1}\le \nu_{k+1}<\nu_{k}\le \nu_i$, so $\lambda^p_i<\nu_i\pm\epsilon$ for small enough $\epsilon$. For $i>k$, the inequality $\mu_i\le \nu_i$ is unchanged. 
\end{enumerate}
So we have a contradiction and it must instead be true that $\lambda^1=\nu$. The same argument applies to any $\lambda^j$. 

If our common partition $\lambda^1=\hdots=\lambda^{s-1}=\nu$ is expressed $\sum c_i\omega_i$ in the $\{\omega_i\}$ basis, note that for any $c_i\ne 0$, $(\lambda^1,\hdots,\lambda^{s-1},\nu)\pm\epsilon(\omega_i,\hdots \omega_i,\omega_i)$ satisfies (ii') and (iii) for small enough $\epsilon$ and maintains (iv) with equalities, so there must be only one such nonzero coefficient. 

Each $(\omega_\ell,\hdots,\omega_\ell,\omega_\ell)$ is an extremal ray by Lemma \ref{dominants}.  
\end{proof}
\begin{proof}[Proof of (II)]

Finally, we show that $\ell< r/(s-1)\iff (\omega_\ell,\hdots,\omega_\ell,\omega_\ell)$ lies on a Horn facet. 

First, suppose $\ell<r/(s-1)$. Find integers $a_1,\hdots,a_{s-1}$ such that\footnote{unless $r-1$ is divisible by $s-1$, choices abound here. Say $r-1\equiv b$ mod $s-1$, where $0\le b<s-1$. Then one can take $a_i = (r-1-b)/(s-1)$ for $i\ge 2$ and $a_1 = (r-1+b(s-2))/(s-1)$.}
\begin{align*}
r-1 &= a_1+\hdots+a_{s-1},\\
\text{ each } a_j&\ge \left\lfloor \frac{r-1}{s-1}\right\rfloor.
\end{align*}
Set $I_p = \{a_p+1\}$ and $K = \{r\}$. Then $c_{\tau(I_1),\hdots,\tau(I_{s-1})}^{\tau(K)} = 1$. Since 
$$\ell<r/(s-1)\le \left\lfloor \frac{r-1}{s-1} \right\rfloor+1\le a_p+1$$
for each $p$, 
the associated Horn inequality, applied to $(\omega_\ell,\hdots,\omega_\ell,\omega_\ell)$, is $0+\hdots+0\ge 0$, thus satisfied with equality.

Second, suppose $\ell\ge r/(s-1)$. Assume that $(\omega_\ell,\hdots,\omega_\ell,\omega_\ell)$ lies on a Horn facet associated to $d$-element subsets $I_1,\hdots,I_{s-1},K$. So
$$
\sum_{a\in I_1,a\le \ell} 1 + \hdots + \sum_{a\in I_{s-1},a\le \ell} 1 = \sum_{k\in K,k\le \ell} 1.
$$
Since $c_{\omega_\ell,0,\hdots,0}^{\omega_\ell}=1$, it must also be true that 
$$
\sum_{a\in I_1,a\le \ell} 1 + 0 \ge \sum_{k\in K,k\le \ell} 1,
$$
in which case the above must hold with equality and the sets $\{a\in I_p|a\le \ell\}$ must be empty for $p\ge 2$. By symmetry, the set $\{a\in I_1:a\le \ell\}$ is also empty, so every $I_p$ consists only of elements $>\ell$. 

Now, the stipulation $|\tau(I_1)|+\hdots +|\tau(I_{s-1})| = |\tau(K)|$ forces 
$$
\sum_{p=1}^{s-1}\sum_{a\in I_p}a = \sum_{k\in K}k+(s-2)d(d+1)/2. 
$$
However, a lower bound for the LHS (i.e., each $I_p$ is as small as possible at $\{\ell+1,\hdots,\ell+d\}$) is $(s-1)d\ell+(s-1)d(d+1)/2$, while an upper bound for the RHS (i.e., where $K = \{r-d+1,\hdots,r-d+d\}$) is $d(r-d)+(s-1)d(d+1)/2$. Therefore we get $(s-1)d\ell\le dr-d^2$. Assuming $r\le (s-1)\ell$, this forces
$$
(s-1)d\ell+d^2\le rd \le (s-1)d\ell,
$$
an obvious contradiction to $d>0$.
\end{proof}

\subsection{Rays of $\mathsf{EqLR}_r^s$ on every Horn facet}\label{odds} 

In contrast, there are some extremal rays of $\mathsf{EqLR}_r^s$ that lie on every Horn facet. If $r=1$, then there are no inequalities (iii), so to make the current discussion more uniform, we will treat (ii') as a Horn inequality, at least if $r=1$. 

\begin{proposition}
Suppose that $(\lambda^1,\hdots,\lambda^{s-1},\nu)$ belongs to $\mathsf{EqLR}_r^s$ and lies on every Horn facet (so really belongs to $\mathsf{LR}_r^s$). Then each $\lambda^p$ is a scalar multiple of $\omega_r$, as is $\nu$. 
\end{proposition}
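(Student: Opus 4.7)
The plan: For $r=1$, every length-1 partition is already a scalar multiple of $\omega_1$, so I may assume $r\ge 2$. Since the hypothesis places the tuple in $\mathsf{LR}_r^s$, equality (ii), namely $\sum_{j=1}^{s-1}|\lambda^j|=|\nu|$, is available throughout.

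The strategy is to use only the most basic family of Horn inequalities: those coming from $\op{Gr}(1,\C^n)=\mathbb{P}^{n-1}$, where $c_{(i_1-1),\hdots,(i_{s-1}-1)}^{(k-1)}=1$ precisely when $\sum_j i_j=k+s-2$. For each $p\in[s-1]$ and each $k\in[r]$, take $I_p=\{k\}$, $I_q=\{1\}$ for $q\ne p$, and $K=\{k\}$; this is a valid Horn datum by the preceding arithmetic. Being on this Horn facet gives
\[
\lambda^p_k+\sum_{q\ne p}\lambda^q_1=\nu_k.
\]
Specializing to $k=1$ yields $\sum_q\lambda^q_1=\nu_1$, and subtracting this from the general equation yields the key identity
\[
\lambda^p_k-\lambda^p_1=\nu_k-\nu_1 \qquad \text{for every } p\in[s-1],\ k\in[r];
\]
in particular, the differences $\lambda^p_k-\lambda^p_1$ are independent of $p$.

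The punch line combines this identity with (ii). Summing $\lambda^p_k=\lambda^p_1+(\nu_k-\nu_1)$ over $p\in[s-1]$ and $k\in[r]$, and using $\sum_p\lambda^p_1=\nu_1$, one computes $\sum_p|\lambda^p|=r\nu_1+(s-1)|\nu|-(s-1)r\nu_1$. Setting this equal to $|\nu|$ and simplifying gives $(s-2)|\nu|=(s-2)r\nu_1$. Since $s\ge 3$, this forces $|\nu|=r\nu_1$; but $\nu_k\le\nu_1$ for every $k$, so this is possible only when $\nu_k=\nu_1$ for every $k$. Hence $\nu=\nu_1\omega_r$, and the identity immediately yields $\lambda^p=\lambda^p_1\omega_r$ for every $p$.

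The only delicate ingredient is the selection of the Horn family above; everything else is elementary linear algebra. The standing hypothesis $s\ge 3$ enters in exactly one place, when the factor $(s-2)$ is cancelled to conclude $|\nu|=r\nu_1$.
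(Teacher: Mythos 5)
Your proof is correct. Both you and the paper derive the same key identity $\lambda^p_1-\lambda^p_k=\nu_1-\nu_k$ from the same family of $d=1$ Horn equalities (with $I_p=\{k\}$, $I_q=\{1\}$ for $q\ne p$, $K=\{k\}$), and both use $s\ge 3$ at exactly the same juncture to cancel a factor of $s-2$. The only divergence is the finishing step: you sum the identity over all $p$ and $k$ and invoke the trace equality (ii) $\sum_p|\lambda^p|=|\nu|$ to get $(s-2)|\nu|=(s-2)r\nu_1$ and conclude. The paper instead proves, for each $k$, that the common difference $a=\nu_1-\nu_k$ vanishes by a telescoping argument: it introduces $b=\sum_p\sum_{i<k}\lambda^p_i-\sum_{i<k}\nu_i$ and $c=\sum_p\sum_{i\le k}\lambda^p_i-\sum_{i\le k}\nu_i$, observes $b-c=(s-2)a$, and deduces $a=c=0$ from $b=0$ (induction on $k$), $c\ge 0$ (a Horn inequality), and $a\ge 0$. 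The paper's route is slightly longer but has a small payoff it points out in its last sentence: for $r\ge 2$ the trace equality (ii) is not needed as a hypothesis, since it is proved along the way as the case $k=r$. Your version is more direct but uses (ii) as an input rather than a byproduct; for the statement as given (which assumes membership in $\mathsf{LR}_r^s$, hence (ii)) this costs nothing.
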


\begin{proof}
Actually, we will need surprisingly few of the Horn inequalities. We begin with $\sum_{p=1}^{s-1} \lambda^p_1 = \nu_1$. Now for any $1<k\le r$, there is also the equality
$$
\lambda^1_k+\sum_{p=2}^{s-1} \lambda^p_1 = \nu_k.
$$
Combining these two equations, we get $\lambda^1_1-\lambda^1_k = \nu_1-\nu_k$. The choice of $\lambda^1$ was arbitrary, so we get 
$$
\lambda^1_1-\lambda^1_k = \hdots = \lambda^{s-1}_1-\lambda^{s-1}_k = \nu_1-\nu_k.
$$
Letting $a$ represent that common difference, we wish to show $a=0$. Consider the quantities 
\begin{align*}
0&=\sum_{p=1}^{s-1} \lambda^p_1 - \nu_1,\\
b&=\sum_{p=1}^{s-1} \sum_{i=1}^{k-1}\lambda^p_i - \sum_{i=1}^{k-1}\nu_i,\\
c&=\sum_{p=1}^{s-1} \sum_{i=1}^{k}\lambda^p_i - \sum_{i=1}^{k}\nu_i.
\end{align*}
Of course $b$ and $c$ are also $0$ by assumption, but even if we had not assumed that these Horn inequalities held with equality, we could use the following argument to show $a=c=0$, given that (inducting on $k$) $b = 0$, because
$$
0 +b - c = \underbrace{a+\hdots+a}_{s-1} - a
$$
and at the very least $c\ge0, a\ge 0$. 

Therefore all partitions $\lambda^p$ and $\nu$ are multiples of $\omega_r$. Furthermore, even if we had not assumed $\sum |\lambda^p| = |\nu|$, we would have proved it along the way, except in the single case $r=1$. 
\end{proof}

\begin{corollary}
The extremal rays of $\mathsf{EqLR}_r^s$ lying on every Horn facet are spanned by the collection 
$$
x_j = (0,\hdots,0,\underbrace{\omega_r}_j,0,\hdots,\omega_r), ~j\in [s-1].
$$
\end{corollary}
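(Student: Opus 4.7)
The plan is to combine the preceding proposition with a direct check that each $x_j$ lies on every Horn facet, and then deduce extremality from the obvious linear independence of the $x_j$.

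First, I would apply the preceding proposition to any $(\lambda^1,\hdots,\lambda^{s-1},\nu)\in \mathsf{EqLR}_r^s$ lying on every Horn facet: this forces the tuple to belong to $\mathsf{LR}_r^s$ and each $\lambda^p$, $\nu$ to be a nonnegative scalar multiple of $\omega_r$. Writing $\lambda^p = c_p\omega_r$ and $\nu = c_s\omega_r$, the membership condition $\sum_{p=1}^{s-1}|\lambda^p|=|\nu|$ (established in the proposition's proof) gives $c_s = \sum_{p=1}^{s-1} c_p$. So the tuple is exactly $\sum_{j=1}^{s-1} c_j\, x_j$, showing the subcone in question is contained in $\sum_{j=1}^{s-1}\R_{\ge 0} x_j$.

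Next, I would verify the reverse: that each $x_j$ actually lies in $\mathsf{EqLR}_r^s$ and on every Horn facet. Properties (i) (partition/nonnegativity) and (iv) ($x_j^{(p)}\subseteq \omega_r = x_j^{(s)}$) are immediate. For (ii'), both $\sum_p |x_j^{(p)}|$ and $|x_j^{(s)}|$ equal $r$, so it holds with equality. For any Horn inequality (iii) given by $d$-element sets $I_1,\hdots,I_{s-1},K$ with $c_{\tau(I_1),\hdots,\tau(I_{s-1})}^{\tau(K)} = 1$, the LHS of the inequality evaluates on $x_j$ to $\sum_{a\in I_j} 1 = d$ (since all entries of $\omega_r$ are $1$) and the RHS to $\sum_{k\in K}1 = d$, so equality holds as well. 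Thus each $x_j$ lies on every Horn facet.

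Finally, to conclude that the extremal rays of this subcone are precisely $\R_{\ge 0}x_j$, I would observe that the $x_j$'s are linearly independent: the only nonzero entry of $x_j$ among the first $s-1$ coordinate blocks is in block $j$. Hence $\sum_{j=1}^{s-1}\R_{\ge 0} x_j$ is a simplicial cone of dimension $s-1$, whose extremal rays are exactly the rays $\R_{\ge 0} x_j$. This completes the characterization. No step is genuinely an obstacle here; the entire content was in the preceding proposition, and what remains is essentially bookkeeping and the routine verification that each $x_j$ does lie on every Horn facet.
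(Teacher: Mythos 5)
Your argument is correct and matches the (implicit) intended derivation: the paper supplies no written proof for this corollary, treating it as a direct consequence of the preceding proposition, and your writeup fills in exactly the bookkeeping that was omitted. One small point worth making explicit is that the subcone of points lying on every Horn facet is itself a \emph{face} of $\mathsf{EqLR}_r^s$ (it is cut out by turning valid inequalities into equalities), which is what lets you identify the extremal rays of $\mathsf{EqLR}_r^s$ lying on that locus with the extremal rays of the simplicial cone $\sum_j \R_{\ge 0} x_j$; you use this implicitly in the final step, and stating it would close the loop cleanly.
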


\begin{proposition}
Consider the addition map $f:\bigoplus \R_{\ge0}x_j \oplus\mathsf{C}_{SL_r}^s\to \mathsf{LR}_r^s$. We claim $f$ is an additive, $\R_{\ge 0}$-linear bijection. 
\end{proposition}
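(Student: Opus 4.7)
My plan is to unpack $f$ explicitly and then treat structure-preservation, injectivity, and surjectivity separately; the bulk of the work will be in surjectivity. Concretely, $f$ sends $\left((c_1,\ldots,c_{s-1}),\,(\lambda^1,\ldots,\lambda^{s-1},\nu)\right)$, with each $c_j\ge 0$ and the second summand in $\mathsf{C}_{SL_r}^s$, to $\left(\lambda^1+c_1\omega_r,\ldots,\lambda^{s-1}+c_{s-1}\omega_r,\,\nu+(c_1+\cdots+c_{s-1})\omega_r\right)$. Since this is just the restriction of an $\R$-linear map, additivity and $\R_{\ge 0}$-linearity are immediate; the only non-trivial point for well-definedness is to check that the output actually lies in $\mathsf{LR}_r^s$. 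For this I would use Hermitian realization: if $A_1,\ldots,A_{s-1},C$ realize $(\lambda^1,\ldots,\lambda^{s-1},\nu)$ with $\sum_j A_j=C$, then $A_j+c_jI$ and $C+(\sum_j c_j)I$ still satisfy the sum equality and have the shifted eigenvalues, while the last coordinate of each shifted partition is $0+c_j\ge 0$, placing the output in $\mathsf{LR}_r^s$.

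For injectivity, the key observation is that evaluating the image at the $r$-th coordinate of the $j$-th partition gives $\lambda^j_r+c_j=0+c_j=c_j$, so whenever $f\bigl((c_\bullet),(\lambda^\bullet,\nu)\bigr)=(\mu^\bullet,\kappa)$ one is forced to take $c_j=\mu^j_r$. The remaining data $\lambda^j=\mu^j-\mu^j_r\omega_r$ and $\nu=\kappa-(\sum_j\mu^j_r)\omega_r$ are then determined by subtraction, giving at most one preimage.

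For surjectivity, given $(\mu^1,\ldots,\mu^{s-1},\kappa)\in\mathsf{LR}_r^s$, I would define $c_j:=\mu^j_r\ge 0$, $\lambda^j:=\mu^j-c_j\omega_r$, and $\nu:=\kappa-(\sum_j c_j)\omega_r$, which by construction satisfies $\lambda^j_r=0$ and has $f$-image equal to $(\mu^\bullet,\kappa)$. The main obstacle is showing $(\lambda^1,\ldots,\lambda^{s-1},\nu)\in\mathsf{C}_r^s$, since everything else is formal. Once again Hermitian realization handles this cleanly: matrices $A_j,C$ realizing the $\mu$-tuple produce matrices $A_j-c_jI$, $C-(\sum_j c_j)I$ realizing the $\lambda$-tuple with the sum equality preserved. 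Alternatively, one could verify directly that every Horn inequality indexed by $d$-element subsets shifts by exactly $d\sum_j c_j$ on both sides, while the trace identity $\sum_j|\lambda^j|=|\nu|$ is preserved; the auxiliary $\nu_r\ge 0$ then follows from the Horn inequality associated to $I_j=K=[r-1]$. Thus the only genuinely non-formal ingredient is the translation-invariance of the defining conditions under $\lambda^j\mapsto\lambda^j+c_j\omega_r$, $\nu\mapsto\nu+(\sum_j c_j)\omega_r$, and the matrix viewpoint makes this transparent.
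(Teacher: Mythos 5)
Your proof is correct. The essential difference from the paper's argument is methodological: you perform the shift $\lambda^j\mapsto\lambda^j-c_j\omega_r$, $\nu\mapsto\nu-(\sum c_j)\omega_r$ all at once and justify that the result lies in $\mathsf{C}_r^s$ by passing to Hermitian matrices ($A_j\mapsto A_j-c_jI$, $C\mapsto C-(\sum c_j)I$), which is the very definition of $\mathsf{C}_r^s$; the paper instead subtracts $\lambda^j_rx_j$ one index $j$ at a time and at each step checks the inequalities (i), nonnegativity, (ii), (iii) to keep the intermediate element inside $\mathsf{LR}_r^s$, landing in $\mathsf{C}_{SL_r}^s$ only after the last subtraction. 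Your matrix-based route is more economical because it sidesteps the step-by-step verification of Horn inequalities and sidesteps the (unnecessary for the target cone) nonnegativity of $\nu$; the paper's approach has the mild advantage of being stated entirely in terms of the inequality description and of exhibiting a chain of elements inside $\mathsf{LR}_r^s$, which is closer in spirit to the inductive manipulations used elsewhere in the paper. You also make the injectivity argument explicit by extracting $c_j=\mu^j_r$ from the $r$-th coordinate, whereas the paper dismisses this as ``clearly no dependencies among the direct summands''; both are fine, yours is just more detailed.
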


\begin{proof}
Clearly there are no dependencies among the direct summands on the left. Let $(\lambda^1,\hdots,\lambda^{s-1},\nu)\in \mathsf{LR}_r^s$ be arbitrary. Since $\lambda^1_r\le \nu_r$, $(\lambda^1-\lambda^1_r\omega_r,\lambda^2,\hdots,\lambda^{s-1},\nu-\lambda^1_r\omega_r)$ still satisfies (i), nonnegativity, (ii), and (iii). 

But therefore $\lambda^2_r\le \nu_r-\lambda^1_r$, from which we deduce that 
$$(\lambda^1-\lambda^1_r\omega_r,\lambda^2-\lambda^2_r\omega_r,\hdots,\lambda^{s-1},\nu-\lambda^1_r\omega_r-\lambda^2_r\omega_r)$$
once again satisfies (i), nonnegativity, (ii), and (iii). Continuing in this manner we arrive at an element of $\mathsf{C}_{SL_r}^s$, each time subtracting $\lambda^j_rx_j$. 
\end{proof}

\begin{proposition}
Any element of $\mathsf{C}_r^s$ can be written as a sum $z + \sum a_j x_j$, where $z\in \mathsf{LR}_r^s$ and $a_j\in \R$. Likewise, any element of $\mathsf{EqC}_r^s$ can be written as a sum $\hat z + \sum \hat a_j x_j$, where $\hat z\in \mathsf{EqLR}_r^s$ and $\hat a_j\in \R$. 
\end{proposition}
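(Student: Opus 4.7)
The plan is to produce, for each given element of $\mathsf{C}_r^s$ or $\mathsf{EqC}_r^s$, an explicit $\R$-linear combination of the $x_j$ whose subtraction lands us in the smaller cone. The starting observation I would verify first is a closure property: for any real scalars $c_1,\hdots,c_{s-1}$, the tuple $(\lambda^1,\hdots,\nu) - \sum_j c_j x_j$ lies in $\mathsf{C}_r^s$ (respectively $\mathsf{EqC}_r^s$) whenever $(\lambda^1,\hdots,\nu)$ does. This is immediate from the matrix realization: replacing $A_j$ by $A_j - c_j I$ and $C$ by $C - (\sum_j c_j) I$ preserves both $\sum A_j = C$ and $\sum A_j \ge C$, while shifting the eigenvalue tuples in exactly the stated way. (Alternatively, one checks directly that conditions (i), (ii), (ii'), and each Horn inequality (iii) are invariant under $\lambda^j \mapsto \lambda^j - c_j \omega_r$, $\nu \mapsto \nu - (\sum c_j)\omega_r$.)

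For the first statement I would simply set $a_j := \lambda^j_r$. Then $z := (\lambda^1,\hdots,\nu) - \sum_j a_j x_j$ belongs to $\mathsf{C}_r^s$ by the closure property, and its $j$-th entry has final coordinate $\lambda^j_r - a_j = 0 \ge 0$ for every $j\in [s-1]$, which is exactly the condition cutting $\mathsf{LR}_r^s$ out of $\mathsf{C}_r^s$. Hence $z \in \mathsf{LR}_r^s$.

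The second statement is where some care is needed. Writing $\hat z = (\lambda^1,\hdots,\nu) - \sum_j \hat a_j x_j$, membership in $\mathsf{EqLR}_r^s$ imposes both $\hat a_j \le \lambda^j_r$ and $\sum_{\ell \ne j} \hat a_\ell \le \min_i(\nu_i - \lambda^j_i)$ for each $j \in [s-1]$. The right-hand side of the second inequality may be negative, since nothing in the definition of $\mathsf{EqC}_r^s$ forces $\lambda^j \subseteq \nu$, so the choice $\hat a_j = \lambda^j_r$ from the first part will typically fail. Instead I would take $\hat a_1 = \cdots = \hat a_{s-1} = -N$ for a single real $N$ chosen sufficiently large that $N \ge -\min_j \lambda^j_r$ and $(s-2)N \ge \max_{i,j}(\lambda^j_i - \nu_i)$. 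Such an $N$ exists precisely because $s \ge 3$, so $s-2 \ge 1$; this is the main (small) obstacle, and it reflects the necessary asymmetry in how each $x_j$ inflates $\nu$ compared with any individual $\lambda^j$. With this choice the closure property gives $\hat z \in \mathsf{EqC}_r^s$, both displayed inequalities are satisfied, and we conclude $\hat z \in \mathsf{EqLR}_r^s$.
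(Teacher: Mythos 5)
Your proof is correct and follows essentially the same approach as the paper's: both shift by suitable multiples of the $x_j$ to land in the smaller cone, and both rely on the closure of $\mathsf{C}_r^s$ and $\mathsf{EqC}_r^s$ under adding multiples of the $x_j$. The paper simply uses a single uniform large shift $B(x_1+\hdots+x_{s-1})$ for both cases, whereas you take a tight per-$j$ shift $a_j = \lambda^j_r$ in the classical case and the uniform shift only in the equivariant case; you also make explicit the observation that $s-2\ge 1$ is what rescues the containment condition (iv), which the paper leaves implicit. These are presentational differences, not a different route.
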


\begin{proof}
Let $x\in \mathsf{C}_r^s$ be arbitrary. If all entries of $x$ are nonnegative, then $x\in \mathsf{LR}_r^s$. Otherwise, $x+B(x_1+x_2+\hdots+x_{s-1})$ will have nonnegative entries for $B\gg 0$, and will still belong to $\mathsf{C}_r^s$, so 
$$
x = \left(x+B(x_1+x_2+\hdots+x_{s-1})\right) - B(x_1+x_2+\hdots+x_{s-1}).
$$
A similar argument works for $\mathsf{EqC}_r^s$, recognizing that for the containment inequalities (iv) will also be satisfied for $B\gg0$. 
\end{proof}

\section{Examples and Counterexamples}\label{x}

\subsection{Using the algorithm}

To illustrate the rays algorithm, let us take a small example where $s=3$, $r=3$. Consider the Horn facet $\hat{\mathcal{F}}$ given by $I_1 = I_2 = \{2\}, K = \{3\}$, with associated equality 
$$
\lambda^1_2+\lambda^2_2 = \nu_3.
$$

\subsubsection{Type I rays}

One choice of type I datum is $j=1,a=2$. Using these, we get $I_1' = \{3\}, I_2' = \{2\}, K' = \{3\}$. The ways to decrement either of the first two sets or increment $K'$ are as follows:
\begin{itemize}
\item $I_1'' = \{2\}, I_2'' = \{2\}, K'' = \{3\} $ $\rightsquigarrow$ $c_{\{2\},\{2\}}^{\{3\}} = 1$ $\implies$ $\lambda^1_2-\lambda^1_3 = 1$. 
\item $I_1'' = \{3\}, I_2'' = \{1\}, K'' = \{3\} $ $\rightsquigarrow$ $c_{\{3\},\{1\}}^{\{3\}} = 1$ $\implies$ $\lambda^2_1-\lambda^2_2 = 1$. 
\item $I_1'' = \{3\}, I_2'' = \{2\}, K'' = \{4\} $ $\rightsquigarrow$ $c_{\{3\},\{2\}}^{\{4\}} = 1$ $\implies$ $\nu_3 = 1$. 
\end{itemize}
All other consecutive differences in $\lambda^1,\lambda^2,\nu$ are $0$ and we get the ray 
$$
(1,1,0;1,0,0;1,1,1).
$$

A second type I datum is $j=2,a=2$. By symmetry, we know from our first calculation that the ray is $(1,0,0; 1,1,0; 1,1,1)$. 

The final type I datum is $j=3, a=3$, resulting in $I_1' = I_2' =K'= \{2\}$. The possible ways to decrement/increment are: 
\begin{itemize}
\item $I_1''=\{1\}, I_2'' = \{2\}, K'' = \{2\}$ $\rightsquigarrow$ $c_{\{1\},\{2\}}^{\{2\}} = 1$ $\implies$ $\lambda^1_1-\lambda^1_2 = 1$. 
\item $I_1''=\{2\}, I_2'' = \{1\}, K'' = \{2\}$ $\rightsquigarrow$ $c_{\{2\},\{1\}}^{\{2\}} = 1$ $\implies$ $\lambda^2_1-\lambda^2_2 = 1$. 
\item $I_1''= \{2\}, I_2'' = \{2\}, K'' = \{3\}$ $\rightsquigarrow$ $c_{\{2\},\{2\}}^{\{3\}} = 1$ $\implies$ $\nu_2-\nu_3 = 1$. 
\end{itemize}
So the ray produced is 
$$
(1, 0, 0; 1, 0, 0; 1,1,0).
$$

\subsubsection{Type II rays}

To find the type II rays on $\hat{\mathcal{F}}_2$, we need to know the rays of $\mathsf{LR}_1^3\times \mathsf{EqLR}_2^3$, which are
\begin{align*}
(1;0;1)\times &(0,0;0,0;0,0) \\
(0;1;1)\times&(0,0;0,0;0,0) 
\end{align*}
together with the 10 rays of the form $(0;0;0)\times z$ where $z$ belongs to the $r=2$ table below. 
For example, let's apply $\wh{\op{Ind}}$ to $(0;0;0)\times (1,0;1,1;1,1)$. First $\pi^{-1}$ takes it to $(1,0,0;1,0,1;1,1,0)$. Then using formula (\ref{tIIform}), $\hat p_2$ sends this to 
$$
(1,0,0;1,0,1;1,1,0) + (1,0,0;1,1,0;1,1,1) - (1,0,0;1,0,0;1,1,0) = (1,0,0;1,1,1;1,1,1),
$$
which is one of the rays in table $r=3$ below. 

For another example, let's apply the induction map to $(1;0;1)\times (0,0;0,0;0,0)$. Applying $\pi^{-1}$ we get $(0,1,0;0,0,0;0,0,1)$. Then $\hat p_2$ takes this to 
$$
(0,1,0;0,0,0;0,0,1) - (1,1,0;1,0,0;1,1,1) + (1,0,0;1,0,0;1,1,0) = 0.
$$
This we expect as noted in the proof of Corollary \ref{noway}. 

The total outputs produced include the following rays:
\begin{align*}
(0,0,0;1,0,0;1,0,0) & & (0,0,0;1,1,1;1,1,1) \\ 
(1,0,0;0,0,0;1,0,0) & & (1,1,1;0,0,0;1,1,1) \\ 
(1,0,0;1,0,0;1,0,0) & & (1,0,0;1,1,1;1,1,1) \\ 
(1,1,1;1,0,0;1,1,1) & & 
\end{align*}
Additionally, $0$ is output three times in accordance with Corollary \ref{noway} (in our example $|\mathfrak{J}| = 3$), and two elements are produced which are not on an extremal ray: 
\begin{align*}
(0;0;0)\times(1,1;1,1;1,1)&\xmapsto{\wh{\op{Ind}}} (2,1,1;2,1,1;2,2,2)\\
(0;0;0)\times(1,1;1,1;2,1)&\xmapsto{\wh{\op{Ind}}} (2,1,1;2,1,1;3,2,2)
\end{align*}

\subsection{Data for small $r$}

To give a sense of the extremal rays of $\mathsf{EqLR}_r^3$, we have recorded the complete list of rays for $r\le 3$. The ``strictly equivariant'' rays (those violating (ii)) are below the dashed line; those which lie on $\mathsf{LR}_r^3$ are above. 

\begin{center}
\def\arraystretch{1.3}
\begin{tabular}[t]{|c|}\hline
$r=1$ \\\hline
(1;0;1) \\
(0;1;1)\\\hdashline
(1;1;1)\\\hline
\end{tabular} 
\quad 
\def\arraystretch{1.3}
\begin{tabular}[t]{|c|}\hline
$r=2$ \\\hline
(0,0;1,0;1,0)\\
 (0,0;1,1;1,1)\\
(1,0;0,0;1,0) \\
(1,1;0,0;1,1) \\
 (1,0;1,0;1,1) \\\hdashline
 (1,0;1,0;1,0) \\
 (1,0;1,1;1,1) \\
 (1,1;1,0;1,1) \\
 (1,1;1,1;1,1) \\
 (1,1;1,1;2,1) \\\hline
\end{tabular}
\quad 
\begin{tabular}[t]{|c|c|c|} \hline
\multicolumn{3}{|c|}{$r=3$} \\\hline
(0,0,0;1,0,0;1,0,0) & (1,0,0;1,0,0;1,1,0) & (1,1,0;1,0,0;1,1,1) \\
(0,0,0;1,1,0;1,1,0) & (1,0,0;1,1,0;1,1,1) & (1,1,0;1,1,0;2,1,1) \\
(0,0,0;1,1,1;1,1,1) & (1,1,0;0,0,0;1,1,0) & (1,1,1;0,0,0;1,1,1) \\
(1,0,0;0,0,0;1,0,0) &  &  \\\hdashline
(1,0,0;1,0,0;1,0,0) & (1,1,0;1,1,0;2,1,0) & (1,1,1;1,1,1;1,1,1) \\
(1,0,0;1,1,0;1,1,0) & (1,1,0;1,1,1;1,1,1) & (1,1,1;1,1,1;2,1,1) \\
(1,0,0;1,1,1;1,1,1) & (1,1,0;1,1,1;2,1,1) & (1,1,1;1,1,1;2,2,1) \\
(1,1,0;1,0,0;1,1,0) & (1,1,1;1,0,0;1,1,1) & (1,1,1;2,1,1;2,2,1) \\
(1,1,0;1,1,0;1,1,0) & (1,1,1;1,1,0;1,1,1) & (2,1,1;1,1,1;2,2,1) \\
(1,1,0;1,1,0;1,1,1) & (1,1,1;1,1,0;2,1,1) &  \\\hline
\end{tabular}
\end{center}~\\

In the following table, we have recorded the number of extremal rays of the cones $\mathsf{LR}_r^3$ and $\mathsf{EqLR}_r^3$ for the first few values of $r$. Calculations were done using {\tt Sage} \cite{Sage} and {\tt Normaliz} \cite{Nmz}. 

\begin{center}
\def\arraystretch{1}
\begin{tabular}{|c|c|c|}\hline 
& &  \\ [-10pt]
$r$ & $\#$ rays of $\mathsf{LR}_r^3$ & $\#$ rays of $\mathsf{EqLR}_r^3$ \\[3pt]\hline
1 & 2 & 3 \\
2 & 5 & 10 \\ 
3 & 10 & 27 \\ 
4 & 20 & 72 \\ 
5 & 44 & 195 \\ 
6 & 114 & 532 \\ 
7 & 362 & 1469 \\\hline
\end{tabular}
\end{center}

\subsection{Extra Hilbert basis elements}

The semigroup of lattice points $\mathsf{EqLR}_r^s\cap \Z^{rs}$ has a finite list of indecomposable elements -- those which are not the sum of two nonzero elements -- called the \emph{Hilbert basis}. Equivalently, the Hilbert basis is the (unique) minimal generating set of $\mathsf{EqLR}_r^s\cap \Z^{rs}$ over $\Z_{\ge0}$. 

Now every extremal ray affords us with exactly one Hilbert basis element, namely the first lattice point along that ray. This element is indecomposable since any summands must be parallel (by extremality) and therefore one of them equal to 0 (by being the first lattice point). 

For general pointed rational cones, the Hilbert basis can be much larger than the set of extremal rays. We observe that for $\mathsf{EqLR}_r^3$, this does not happen until $r=6$; see the table below. Since the natural inclusions $\mathsf{EqLR}_r^s\subset \mathsf{EqLR}_{r+1}^s$ given by appending $0$'s preserve the properties of extremal ray and Hilbert basis element, we conclude that for $r\ge 6$ the Hilbert basis of $\mathsf{EqLR}_r^3\cap \Z^{3r}$ is greater in size than the set of extremal rays. Calculations were once again done using {\tt Sage} and {\tt Normaliz}. ~\\

\begin{center}
\def\arraystretch{1}
\begin{tabular}{|c|c|c|}\hline
& & \\ [-10pt]
$r$ & $\#$ rays of $\mathsf{EqLR}_r^3$ & $\#$ H.b. elts. of $\mathsf{EqLR}_r^3\cap \Z^{3r}$ \\[3pt]\hline 
1 & 3 & 3\\
2 & 10 & 10\\
3 & 27 & 27\\
4 & 72 & 72\\
5 & 195 & 195\\
6 & 532 & {\bf 535}\\
7 & 1469 & {\bf 1500} \\
\hline
\end{tabular}
\end{center}~\\

Here are the three ``extra'' Hilbert basis elements at $r=6$:
\begin{align*}
&(2,1,1,1,1,1;2,2,2,1,1,1;3,3,2,2,2,1)\\
&(2,2,1,1,1,1;2,2,1,1,1,1;3,2,2,2,2,1)\\
&(2,2,2,1,1,1;2,1,1,1,1,1;3,3,2,2,2,1)
\end{align*}
The phenomenon of extra Hilbert basis elements has neither been observed nor ruled out for the cones $\mathsf{LR}_r^3$, having checked the cases $r\le9$ by computer.

\begin{bibdiv}
\begin{biblist}

\bib{ARY}{article} {
    AUTHOR = {Anderson, D.},
    AUTHOR = {Richmond, E.},
    AUTHOR = {Yong, A.},
    TITLE = {Eigenvalues of Hermitian matrices and equivariant cohomology of Grassmannians},
    JOURNAL = {Compositio Math.},
    VOLUME = {149},
    NUMBER = {9},
    YEAR = {2013},
    PAGES = {1569--1582}
}

\bib{B1}{article} {
    AUTHOR = {Belkale, P.},
     TITLE = {Local systems on {$\Bbb P^1-S$} for {$S$} a finite set},
   JOURNAL = {Compositio Math.},
  FJOURNAL = {Compositio Mathematica},
    VOLUME = {129},
      YEAR = {2001},
    NUMBER = {1},
     PAGES = {67--86},
}

\bib{B2}{article} {
    AUTHOR = {Belkale, P.}
   TITLE = {Extremal rays in the hermitian eigenvalue problem},
   JOURNAL = {Math. Ann.},
   VOLUME = {373},
   PAGES = {1103--1133},
   YEAR = {2019}
}

\bib{BKiers} {article} {
    AUTHOR = {Belkale, P.}
    AUTHOR = {Kiers, J.},
    TITLE = {Extremal rays in the Hermitian eigenvalue problem for arbitrary types},
    JOURNAL = {Transform. Groups},
    VOLUME = {25},
    PAGES = {667-706},
    YEAR = {2020}
    }

\bib{Nmz}{misc} {
   AUTHOR = {Bruns, W.}
   AUTHOR = {Ichim, B.}
   AUTHOR = {R\"omer, T.}
   AUTHOR = {Sieg, R.} 
   AUTHOR = {S\"oger, C.},
   TITLE = {Normaliz. Algorithms for rational cones and affine monoids},
   NOTE = {Available at \url{http://normaliz.uos.de}}
   }

\bib{Frie}{article} {
    TITLE = {Finite and infinite dimensional generalizations of Klyachko's theorem}
    AUTHOR = {Friedland, S.},
    JOURNAL = {Linear Algebra Appl.},
    VOLUME = {319},
    YEAR = {2000},
    PAGES = {3--22}
    }

\bib{Fu2}{article}{
   AUTHOR = {Fulton, W.},
   TITLE = {Eigenvalues of majorized Hermitian matrices and Littlewood-Richardson coefficients},
   JOURNAL = {Linear Algebra Appl.},
    VOLUME = {319},
    YEAR = {2000},
    PAGES = {23--36}
    }

\bib{Fu}{article} {
    AUTHOR = {Fulton, W.},
     TITLE = {Eigenvalues, invariant factors, highest weights, and
              {S}chubert calculus},
   JOURNAL = {Bull. Amer. Math. Soc. (N.S.)},
  FJOURNAL = {American Mathematical Society. Bulletin. New Series},
    VOLUME = {37},
      YEAR = {2000},
    NUMBER = {3},
     PAGES = {209--249},
}

\bib{Horn}{article} {
  AUTHOR = {Horn, A.},
  TITLE = {Eigenvalues of sums of Hermitian matrices},
  JOURNAL = {Pacific J. Math.},
  VOLUME = {12},
  YEAR = {1962},
  PAGES = {225--241}
  }

\bib{KTT}{article} {
  AUTHOR = {King, R.},
  AUTHOR = {Tollu, C.}, 
  AUTHOR = {Toumazet, F.},
  TITLE = {Factorisation of Littlewood-Richardson coefficients},
  JOURNAL = {Jour. Comb. Theory, Series A},
  VOLUME = {116},
  NUMBER = {2},
  PAGES = {314--333},
  YEAR = {2009}
  }

\bib{Kly}{article} {
    AUTHOR = {Klyachko, A. A.},
     TITLE = {Stable bundles, representation theory and {H}ermitian
              operators},
   JOURNAL = {Selecta Math. (N.S.)},
  FJOURNAL = {Selecta Mathematica. New Series},
    VOLUME = {4},
      YEAR = {1998},
    NUMBER = {3},
     PAGES = {419--445},
}

\bib{KT}{article} {
   AUTHOR = {Knutson, A.}
   AUTHOR = {Tao, T.},
   TITLE = {The honeycomb model of {$\operatorname{GL}_n(\C)$} tensor products. I. Proof of the saturation conjecture.},
   JOURNAL = {J. Amer. Math. Soc.},
   FJOURNAL = {Journal of the American Mathematical Society},
   VOLUME = {12},
   YEAR = {1999},
   NUMBER = {4},
   PAGES = {1055--1090},
}

\bib{KTW}{article} {
   AUTHOR = {Knutson, A.}
   AUTHOR = {Tao, T.}
   AUTHOR = {Woodward, C.},
   TITLE = {The honeycomb model of {$\operatorname{GL}_n(\C)$} tensor products. II. Puzzles determine the facets of the Littlewood-Richardson cone},
   JOURNAL = {J. Amer. Math. Soc.},
   FJOURNAL = {Journal of the American Mathematical Society},
   VOLUME = {17},
   YEAR = {2004},
   NUMBER = {1},
   PAGES = {19--48},
}

\bib{RYY}{book}{
   AUTHOR = {Robichaux, C.},
   AUTHOR = {Yadav, H.},
   AUTHOR = {Yong, A.},
   TITLE = {Equivariant cohomology, Schubert calculus, and edge-labeled tableaux},
   PUBLISHER = {Cambridge University Press},
   SERIES = {Facets of Algebraic Geometry: A Volume in Honour of William Fulton's 80th Birthday, London Mathematical Society Lecture Note Series},
   NOTE = {To appear.}
}

\bib{Sage}{manual}{
  AUTHOR = {The Sage Developers},
  TITLE = {{S}ageMath, the {S}age {M}athematics {S}oftware {S}ystem ({V}ersion 8.0)},
  NOTE = {\url{ http://www.sagemath.org}},
  YEAR = {2017},
}

\bib{Tot}{article}{
 AUTHOR = {Totaro, B.},
 TITLE = {Tensor products of semistables are semistable},
 PAGES = {in Geometry and Analysis on Complex Manifolds, Festschrift for Professor S. Kobayashi's 60th Birthday, ed. T. Noguchi, J. Noguchi, and
T. Ochiai, World Scientific Publ. Co., Singapore, 1994, 242--250},
 }

\bib{Z}{book}{
    AUTHOR = {Zelevinsky, A.},
     TITLE = {Littlewood-{R}ichardson semigroups},
  SERIES = {New perspectives in algebraic combinatorics ({B}erkeley, {CA},
              1996--97), {337--345}, Math. Sci. Res. Inst. Publ.},
    VOLUME = {38},
     PAGES = {337--345},
 PUBLISHER = {Cambridge Univ. Press, Cambridge},
      YEAR = {1999},
   MRCLASS = {05E15 (52B05)},
  MRNUMBER = {1731821},
MRREVIEWER = {Grant Walker},
}

\end{biblist}
\end{bibdiv}
\vspace{0.05in}

\noindent
Department of Mathematics, Ohio State University, 231 W 18th Ave, Columbus, OH 43210\\
{{email: kiers.2@osu.edu}}

\end{document}